\newtheorem{theorem}{Theorem}[section]
\newtheorem{lemma}[theorem]{Lemma}
\newtheorem{definition}[theorem]{Definition}
\newtheorem{proposition}[theorem]{Proposition}
\newtheorem{example}[theorem]{Example}
\def\<{\langle}
\def\>{\rangle}
\date{}
\begin{document}
\renewcommand{\baselinestretch}{1.2}
\renewcommand{\arraystretch}{1.0}
\title{\bf Nijenhuis operators on Leibniz algebras }\author{{\bf Bibhash Mondal$^{1}$,     Ripan Saha$^{2}$\footnote
      { Corresponding author (Ripan Saha),  Email: ripanjumaths@gmail.com}}\\
 {\small 1. Department of Mathematics, Behala College}\\
  {\small Behala, 700060, Kolkata, India}\\  
 {\small Email: mondaliiser@gmail.com}\\
 {\small 2. Department of Mathematics, Raiganj University} \\
{\small  Raiganj 733134, West Bengal, India}}
 \maketitle
\begin{center}
\begin{minipage}{13.cm}

{\bf \begin{center} ABSTRACT \end{center}}
In this paper, we study Nijenhuis operators on Leibniz algebras. We discuss the relationship of Nijenhuis operators with Rota-Baxter operators and modified Rota-Baxter operators on Leibniz algebras. We define a representation theory of Nijenhuis Leibniz algebras and construct a cohomology theory. Next, we define a one-parameter formal deformation theory of Nijenhuis Leibniz algebras and study infinitesimals, rigidity, and equivalences along the line of Gerstenhaber deformation theory. As an application of our cohomology theory, we show that our cohomology is deformation cohomology and study abelian extensions of such algebras.

 \medskip

{\bf Key words}: Leibniz algebra, Nijenhuis operator, Cohomology, Formal deformation, Abelian Extension.
 \smallskip

 {\bf 2020 MSC:} 17A30, 17A32, 17A36, 16S80.
 \end{minipage}
 \end{center}
 \normalsize\vskip0.5cm

\section{Introduction}
\def\theequation{\arabic{section}. \arabic{equation}}
\setcounter{equation} {0}
J. L. Loday \cite{Loday} introduced Leibniz algebra in 1993. Leibniz algebra is a $\mathbb{K}$-vector space $\mathfrak{g}$ together with a bilinear operation (called the bracket) $ [ ~,~]: \mathfrak{g} \times \mathfrak{g} \rightarrow \mathfrak{g}$ satisfying the following identity:  \[ [x,[y,z]]=[[x,y],z]+[y,[x,z]] ,~~~~ \mbox{for}~ x,y,z \in \mathfrak{g}. \]

Leibniz algebra is often considered as noncommutative Lie algebra; as a result several research have been done to generalize the result of Lie algebra to Leibniz algebra. In recent years, various linear operators like Rota-Baxter operator \cite{MS}, modified Rota Baxter operator \cite{MS-mody} etc., and their induced cohomology, deformation is studied.
In this paper, we study Nijenhuis operators on Leibniz algebras. 

A Nijenhuis operator on $\mathfrak{g}$ is a linear map $ N:\mathfrak{g} \rightarrow \mathfrak{g}$ satisfying the following condition \[[N(x),N(y)]=N \bigg([N(x),y]+[x,N(y)]- N[x,y]\bigg),~~~~\mbox{for all} ~~x, y \in \mathfrak{g}.\]

Dorfman \cite{Dorfman1993} studied Nijenhuis operator via deformation of Lie algebra. Moreover, Nijenhuis operators on Lie algebras play an interesting role in the study of integrability of nonlinear evolution equations \cite{Dorfman1993}. Introduction of Dirac structures by Dorfan gave new interpretations to the already existing Nijenhuis set ups. In 2004, Gallardo and Nunes \cite{CN04} introduced Dirac Nijenhuis structures. Longguang and Baokang \cite{LB04} separately developed Dirac Nijenhuis manifolds in 2004. In 2011, Kosmann-Schwarzbach \cite{kos} studied Dirac Nijenhuis structures on Courant algebroid. A notion of a Nijenhuis operator on a 3-Lie algebra was introduced in \cite{zhang} to study the 1-order deformations of a 3-Lie algebra. In \cite{Wang}, the authors defined the notion of a Nijenhuis operator on a pre-Lie algebra which generates a trivial deformation of the pre-Lie algebra. Das and Sen \cite{DS} studied Nijenhuis operators on Hom-Lie algebras from cohomological point of view. For the study of Nijenhius operators of various other algebraic structures, see, \cite{LSZB, HCM}.

 In this paper, we study Nijenhuis operators on Leibniz algebras.  We call a Leibniz algebra with a Nijenhuis operator defined on it by Nijenhuis Leibniz algebra. We give an interesting example of such algebras in dimension 2. We discuss relationship between Nijenhuis operator with Rota-Baxter operator and modified Rota-Baxter operator defined on a Leibniz algebra. It is believed that one can learn more about a mathematical object by studying its deformations \cite{gers, gers2}. To study deformation theory of a type of algebra one needs a suitable cohomology, called deformation cohomology which controls deformations in question. In the case of associative algebras, deformation cohomology is Hochschild cohomology and for Lie algebras, the associated deformation cohomology is Chevalley-Eilenberg cohomology. In this paper, we study one-parameter formal deofrmation theory of Nijenhuis Leibniz algebras and associated deformation cohomology. We study the infinitesimal, rigidity and equivalences for such deformation.  We also show that our cohomology is related with the abelian extensions of such algebras.

The work of the present paper is organised as follows: In Chapter $\ref{chap2}$, we recall some definitions, give an example, and show some relationship with other operators. In Chapter \ref{chap3}, we define a representation of Nijenhuis Leibniz algebras to define a suitable cohomology theory. In Chapter \ref{chap4}, we define a cohomology theory for Nijenhuis Leibniz algebras generalizing the cohomology of Leibniz algebras. In Chapter \ref{chap5}, we define formal deformation theory of such algebras following the Gerstenhaber \cite{gers, gers2} algebraic deformation theory. In the final Chapter \ref{chap6}, we study an abelian extension of Nijenhuis Leibniz algebras.

Throghout the paper, all vector spaces are considered over field $\mathbb{K}$ of characteristic 0.

 \medskip

\section{Basics and examples of Nijenhuis operators}\label{chap2}
\def\theequation{\arabic{section}.\arabic{equation}}
\setcounter{equation} {0}
Let us recall some basic definitions, a new example and few small results related to Nijenuis Leibniz algebras \cite{Loday, Wang, MS, MS-mody}.
\begin{definition}(\cite {Loday})
A \textbf{ Leibniz algebra} is a vector space $\mathfrak{g}$ together with a bilinear operation (called the bracket) $ [ ~,~]: \mathfrak{g} \times \mathfrak{g} \rightarrow \mathfrak{g}$ satisfying the following identity  \[ [x,[y,z]]=[[x,y],z]+[y,[x,z]] ,~~~~ \mbox{for}~ x,y,z \in \mathfrak{g}. \]
It is denoted by $(\mathfrak{g},[~,~])$ or sometimes simply by $\mathfrak{g}$. The above definition is of left Leibniz algebra. In this paper, we consider left Leibniz algebra simply as Leibniz algebra.
\end{definition}

\begin{example} (\cite{Demir})\label{exam leib}
Consider the two dimensional vector space $\mathbb{R}^2$ with standard basis $\{e_1,e_2\}$. Now the bracket is defined by $[e_2,e_1]=[e_2,e_2]=e_1$ with all other combination is zero. Then $(\mathbb{R}^2, [~,~])$ is a Leibniz algebra.
\end{example}

\begin{definition}(\cite {Sun})
Let $\mathfrak{g}$ be a Leibniz algebra. A \textbf{Nijenhuis operator } on $\mathfrak{g}$ is a linear map $ N:\mathfrak{g} \rightarrow \mathfrak{g}$ satisfying the following condition \[[N(x),N(y)]=N \bigg([N(x),y]+[x,N(y)]- N[x,y]\bigg),~~~~\mbox{for all} ~~x, y \in \mathfrak{g}\]
\end{definition}
\begin{definition}\cite{Tang}
Let $\mathfrak{g}$ be a Leibniz algebra. A linear operator $N$ on $\mathfrak{g}$ is called  a \textbf{Rota-Baxter operator} on $(\mathfrak{g}$ if $[Nx,Ny]=N([x,Ny]+[Nx,y])$ for all $x,y \in \mathfrak{g}.$
\end{definition}
\begin{definition} (\cite {Guo})
Let $\mathfrak{g}$ be a Leibniz algebra. A linear operator $N$ on $\mathfrak{g}$ is called  a \textbf{Rota-Baxter operator of weight $\lambda \in \mathbb{K}$} on $\mathfrak{g}$ if $[Nx,Ny]=N([x,Ny]+[Nx,y] + \lambda N[x,y])$, for all $x,y \in \mathfrak{g}.$
\end{definition}
\begin{definition}(\cite{Das})
Let $\mathfrak{g}$ be a Leibniz algebra. A \textbf{modified Rota-Baxter operator of weight $\lambda$} on $\mathfrak{g}$ is a linear operator $N: \mathfrak{g}\rightarrow \mathfrak{g}$, such that $[Nx,Ny]=N([x,Ny]+[Nx,y])+\lambda [x,y]$ for all $x,y \in \mathfrak{g}.$
\end{definition}

\begin{definition}
 A Leibniz algebra  $\mathfrak{g}$ equipped with a Nijenhuis operator $N$ on $\mathfrak{g}$ is called a \textbf{Nijenhuis Leibniz algebra} and it is denoted by $(\mathfrak{g}_N,[~,~])$ or simply by $\mathfrak{g}_N$
\end{definition}
\begin{definition}
Let $\mathfrak{g}_N$ and $(\mathfrak{g^{'}}_{N^{'}},[~,~]^{'})$ be two Nijenhuis Leibniz algebra. Then a map $\phi : \mathfrak{g} \rightarrow \mathfrak{g^{'}}$ is called a \textbf{morphism of Nijenhuis Leibniz algebra} if the map $\phi$ is a Leibniz algebra morphism satisfying the condition $N^{'} \circ \phi = \phi \circ N.$
\end{definition}

\begin{proposition}\label{prop basic}
Let $\mathfrak{g}_N$ be a Nijehnuis Leibniz algebra. We define $[x,y]_*=[x,N(y)]+[N(x),y] -N([x,y])$ for all $x ,y  \in \mathfrak{g}$. Then, 
\begin{enumerate}
\item  $(g,[~,~]_*)$ is a Leibniz algebra.
\item $N$ is also a Nijehnuis operator on $(\mathfrak{g},[~,~]_*)$
\item The map $N:(g,[~,~]) \rightarrow (g,[~,~]_*)$ is a morphism of Nijenhuis Leibniz algebra.
\end{enumerate}
\end{proposition}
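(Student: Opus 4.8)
The plan is to reduce all three assertions to a single reformulation of the Nijenhuis condition. Applying $N$ to the defining expression of the new bracket gives
\[
N[x,y]_* = [N(x),N(y)], \qquad x,y\in\mathfrak{g},
\]
and this one identity is the workhorse for every part. Throughout I use $[x,y]_*=[N(x),y]+[x,N(y)]-N[x,y]$.

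For part (1) I would verify the Leibniz identity for $[\cdot,\cdot]_*$ by expanding $[x,[y,z]_*]_*$, $[[x,y]_*,z]_*$ and $[y,[x,z]_*]_*$ directly from the definition and replacing each factor $N[\cdot,\cdot]_*$ by $[N(\cdot),N(\cdot)]$. This produces three families of terms. The terms carrying no outer $N$ are ordinary triple brackets of the original operation whose inner entries are decorated by $N$, and they cancel in triples by the Leibniz identity of $[\cdot,\cdot]$. The genuinely mixed terms, of the shapes $[N(a),N[b,c]]$ and $[N[a,b],N(c)]$, I rewrite once more through the Nijenhuis relation; this leaves only terms carrying a single outer $N$ and terms carrying an outer $N^2$. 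The $N^2$-terms assemble into $N^2$ applied to the original Leibniz identity and vanish, while the single-$N$ terms factor as $N$ applied to a combination that should again vanish by the Leibniz identity of $[\cdot,\cdot]$. The one real obstacle is the bookkeeping in this last step: grouping the terms and applying the Leibniz identity pairwise leaves a spurious residue, so I would instead rewrite every nested bracket in a fixed normal form via $[[a,b],c]=[a,[b,c]]-[b,[a,c]]$; in that normal form each monomial occurs exactly twice with opposite signs and the whole expression collapses, which is the decisive computation.

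For part (2) I would check that $N$ is Nijenhuis for $[\cdot,\cdot]_*$, namely $[N(x),N(y)]_*=N\big([N(x),y]_*+[x,N(y)]_*-N[x,y]_*\big)$. Expanding the left-hand side and using $N[\cdot,\cdot]_*=[N(\cdot),N(\cdot)]$ to collapse the resulting $N^2$ contribution, it becomes $[N(x),N^2(y)]+[N^2(x),N(y)]-N^2[x,y]_*$. Applying the Nijenhuis relation to the two mixed brackets $[N(x),N^2(y)]$ and $[N^2(x),N(y)]$ converts the left-hand side term by term into the expansion of the right-hand side, so the two sides agree.

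Part (3) is then immediate: the identity $N[x,y]_*=[N(x),N(y)]$ says exactly that $N$ intertwines the two brackets, so $N$ is a Leibniz algebra morphism between $(\mathfrak{g},[\cdot,\cdot]_*)$ and $(\mathfrak{g},[\cdot,\cdot])$; since the Nijenhuis operator on each side is $N$ itself, the compatibility requirement reads $N\circ N=N\circ N$ and holds trivially, so $N$ is a morphism of Nijenhuis Leibniz algebras.
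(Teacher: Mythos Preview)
Your overall strategy---direct expansion of the three nested $*$-brackets and repeated appeal to the Leibniz identity and the Nijenhuis condition---is the same as the paper's. The paper simply writes out $[a,[b,c]_*]_*$, $[[a,b]_*,c]_*$ and $[b,[a,c]_*]_*$ in full and observes that the difference vanishes by the original Leibniz identity; for part (2) it does the analogous expansion. Your presentation is tidier because you isolate the identity $N[x,y]_*=[N(x),N(y)]$ up front and use it to collapse the $N[\cdot,\cdot]_*$ terms systematically, whereas the paper leaves that identity implicit. Either route works and the computations are equivalent.

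One genuine discrepancy concerns part (3). The paper asserts that $N:(\mathfrak{g},[\cdot,\cdot])\to(\mathfrak{g},[\cdot,\cdot]_*)$ is a morphism, which would require $N[x,y]=[N(x),N(y)]_*$; you instead prove that $N:(\mathfrak{g},[\cdot,\cdot]_*)\to(\mathfrak{g},[\cdot,\cdot])$ is a morphism, using $N[x,y]_*=[N(x),N(y)]$. Your direction is the correct one: the paper's stated direction fails in general (take any Nijenhuis $N$ with $N^2=0$ and $N[x,y]\neq 0$; then $[N(x),N(y)]_*=-N[N(x),N(y)]=-N^2[x,y]_*=0\neq N[x,y]$). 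The paper's own proof of (3) is just the word ``obvious,'' so there is nothing substantive to compare against, but be aware that what you have shown does not literally match the proposition as written---it corrects a misprint in it.
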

 
\begin{proof}
\begin{enumerate}

\item
It is obvious to observe that $[~,~]_*$ is a bilinear map. Now we will show that the Leibniz identity hold for this bilinear map that is the identity \[ [a,[b,c]_{*}]_{*}=[[a,b]_{*},c]_{*}+[b,[a,c]_{*}]_{*} ,~~~~ \mbox{for}~ a,b,c \in \mathfrak{g} \] holds.
Now, 
\begin{eqnarray*}
[a,[b,c]_{*}]_{*}\\ 
&=& [Na,[b,c]_{*}]+[a,N([b,c]_{*})] -N([a,[b,c]_*])\\
&=& [Na,[Nb,c]]+[Na,[b,Nc]]-[Na,N([b,c])]+[a,[Nb,Nc]]\\
&&-N([a,[Nb,c]])-N([a,[b,Nc]])+N([a,N([b,c])])\\
&=&[Na,[Nb,c]]+[Na,[b,Nc]]+ N([a,[b,c]])\\
&&-N([Na,[b,c]])-N([a,N[b,c]])+[a,[Nb,Nc]]\\
&&-N([a,[Nb,c]])-N([a,[b,Nc]])+N([a,N([b,c])])\\
&=&[Na,[Nb,c]]+[Na,[b,Nc]]+[a,[Nb,Nc]]+N([a,[b,c]])\\
&&-N([Na,[b,c]])-N([a,[Nb,c]])-N([a,[b,Nc]])\\
&=&[Na,[Nb,c]]+[Na,[b,Nc]]+[a,[Nb,Nc]]+N([a,[b,c]])\\
&&-N([Na,[b,c]])+[a,[Nb,c]]+[a,[b,Nc]]).\\
\end{eqnarray*}
In the same manner as above, we have 
\begin{align*}
&[[a,b]_{*},c]_{*}= [[Na,Nb],c]+[[Na,b],Nc]+[[a,Nb],Nc]+N([[a,b],c])\\
&-N([[Na,b],c]+[[a,Nb],c]+[[a,b],Nc]).
\end{align*}
and
\begin{align*}
[b,[a,c]_{*}]_{*}&= [Nb,[Na,c]]+[Nb,[a,Nc]]+[b,[Na,Nc]]+N([b,[a,c]])\\
&-N([Nb,[a,c]])+[b,[Na,c]]+[b,[a,Nc]]).
\end{align*}
Now using the Leibniz identity of the Leibniz algebra $(\mathfrak{g},[~,~]_{\mathfrak{g}})$, we conclude 
\[ [a,[b,c]_{*}]_{*}=[[a,b]_{*},c]_{*}+[b,[a,c]_{*}]_{*} ,~~~~ \mbox{for}~ a,b,c \in \mathfrak{g} \] holds. Hence, $(\mathfrak{g},[~,~]_*)$ is a Leibniz algebra.
 
\item  Again, for any $a,b \in \mathfrak{g}$, we have 
\begin{align*}
[Na,Nb]_{*}\\
&=[N(Na),Nb]+[Na,N(Nb)]-N([Na,Nb])\\
&=N([N(Na),b]+[Na,Nb]-N([Na,b]))\\
&~~+N([Na,Nb]+[a,N(Nb)]-N([a,Nb]))\\
&~~-N^2([Na,b]+[a,Nb]-N([a,b]))\\
=&N\bigg(\ [N(Na),b]+[Na,Nb]-N([Na,b])\\
&~~+[Na,Nb]+[a,N(Nb)]-N([a,Nb])\\
&~~-N([Na,b]+[a,Nb]-N([a,b]))\bigg)\\
&=N\bigg( [Na,b]_*+[a,Nb]_*-N([a,b]_*)\bigg ).
\end{align*}
Hence, $N$ is also a Nijenhuis operator on the Leibniz algebra $(\mathfrak{g},[~,~]_*).$
\item  
It is obvious to observe that the map $N: (\mathfrak{g}_{N},[~,~]) \rightarrow (\mathfrak{g}_{N},[~,~]_*)$ , $g \mapsto N(g)$ is a morphism of Nijenhuis Leibniz algebra.
\end{enumerate} 
\end{proof}
\begin{example}
Let us consider the Leibniz algebra $(\mathbb{R}^2,[~,~])$ defined in \ref{exam leib}. Now consider the linear map $N : \mathbb{R}^2 \rightarrow \mathbb{R}^2$ defined by $N(x)=Ax$, where $A=\begin{bmatrix}
a & b\\
c & d \\
\end{bmatrix}$. Then, $N$ is Nijenhuis operator if and only if $c=0$ and either $a=d$ or $a-d=b$.\\
To prove the above , Let $N$ be a Nijenhuis operator. Thus,   
\[[N(x),N(y)]=N([N(x),y]+[x,N(y)]- N[x,y]),~~~~\mbox{for all} ~~x, y \in \mathfrak{g}.\]
Observe that, we have
\begin{itemize}
\item[(i)]
If $x=e_1$ and $y=e_1$, we have
\begin{align*}
&[ae_1+ce_2,ae_1+ce_2]=N([ae_1+ce_2,e_2]),\\
&(ac+c^2)e_1=cNe_1=c(ae_1+ce_2),\\
&\text{Therefore,}~ c=0
\end{align*}
\item[(ii)] 
If $x=e_1$ and $y=e_2$, we have
\begin{align*}
&[ae_1+ce_2,be_1+de_2]=N([ae_1+ce_2,e_2]),\\
&(bc+cd)e_1=cNe_1=c(ae_1+ce_2).
\end{align*}
Since, $c=0$, we have no condition from here.
\item[(iii)] 
If $x=e_2$ and $y=e_1$, we have
\begin{align*}
&[be_1+de_2,ae_1]=N([be_1+de_2,e_1]+[e_2,ae_1]-N[e_2,e_1]),\\
&ade_1=N(de_1+ae_1-ae_1)=ade_1.
\end{align*}
Hence, this also gives no additional condition.
\item[(iv)] 
If $x=e_2$ and $y=e_2$, we have
\begin{align*}
&[be_1+de_2,be_1+de_2]=N([be_1+de_2,e_2]+[e_2,be_1+de_2]-N[e_2,e_2]),\\
& (bd+d^2)e_1=N(de_1+be_1+de_1-ae_1),\\
& bd+d^2=(2d+b-a)a ~~\mbox{ i.e.}~ (a-d)^2=b(a-d). 
\end{align*}
This implies either $a=d$ or $a=b+d.$ The converse is a routine verification.
\end{itemize}
\end{example}
Now, we have the following proposition (similar result for pre lie algebra in Prop. 5.2 of \cite{Wang}).
\begin{proposition}
Let $N: \mathfrak{g} \rightarrow \mathfrak{g}$ is a linear operator where $\mathfrak{g}$ is a Leibniz algebra. Then
\begin{itemize}
\item[(a)] If $N^2=0$ then $N$ is a nijenhuis operator if and only if $N$ is a Rota-Baxter operator.
\item[(b)] If $N^2=N$ then $N$ is a nijenhuis operator if and only if $N$ is a Rota-Baxter operator of weight $-1.$
\item[(c)] If $N^2=\pm Id$ then $N$ is a nijenhuis operator if and only if $N$ is a modified Rota-Baxter operator of weight $\mp 1.$
\end{itemize}
\end{proposition}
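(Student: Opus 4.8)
The plan is to put the Nijenhuis condition into a normal form that isolates the term $N^2[x,y]$, and then to read off each of the three claims by matching that term against the defining term of the relevant operator. Using the linearity of $N$, the defining identity of a Nijenhuis operator is equivalent to
\[
[N(x),N(y)]=N\big([N(x),y]+[x,N(y)]\big)-N^2[x,y],\qquad x,y\in\mathfrak{g}.
\]
I would likewise expand the target conditions: a Rota--Baxter operator of weight $\lambda$ satisfies $[N(x),N(y)]=N([x,N(y)]+[N(x),y])+\lambda N^2[x,y]$, and a modified Rota--Baxter operator of weight $\lambda$ satisfies $[N(x),N(y)]=N([x,N(y)]+[N(x),y])+\lambda[x,y]$. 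Thus all three parts reduce to comparing the single correction term $-N^2[x,y]$ appearing in the Nijenhuis identity with the last term of each candidate operator.

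For (a), the assumption $N^2=0$ kills the correction term, so the Nijenhuis identity becomes $[N(x),N(y)]=N([N(x),y]+[x,N(y)])$, which is exactly the weight-zero Rota--Baxter identity; the two conditions are literally the same equation. For (b), under $N^2=N$ the Nijenhuis identity reads $[N(x),N(y)]=N([N(x),y]+[x,N(y)])-N[x,y]$, while the expanded Rota--Baxter identity of weight $-1$ is $[N(x),N(y)]=N([x,N(y)]+[N(x),y])-N^2[x,y]$, and substituting $N^2=N$ makes these coincide. For (c), substituting $N^2=\pm\,\mathrm{Id}$ turns the correction term into $\mp[x,y]$, so the Nijenhuis identity becomes $[N(x),N(y)]=N([N(x),y]+[x,N(y)])\mp[x,y]$, which is precisely the modified Rota--Baxter identity of weight $\mp1$.

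Each step above is a substitution of an assumed operator identity into an equality between two sides, so no direction is distinguished: whenever one of the two defining identities holds, the substitution shows the other holds as well, and all three statements are therefore genuine biconditionals rather than one-way implications. I do not anticipate any real difficulty; the only place demanding care is the tracking of signs and the position of $N$ in the correction term, which is what fixes the weight $-1$ in (b) and the weights $\mp1$ in (c). I would also note in passing that the hypothesis $N^2=N$ in (b) is not strictly necessary, since the Nijenhuis identity and the Rota--Baxter identity of weight $-1$ agree for every linear $N$, so (b) really serves to exhibit the same $N^2$-normalised pattern that governs (a) and (c).
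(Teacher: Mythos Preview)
Your argument is essentially the paper's: use linearity of $N$ to rewrite the Nijenhuis identity as $[N(x),N(y)]=N([N(x),y]+[x,N(y)])-N^2[x,y]$, substitute the hypothesis on $N^2$, and recognise the result as the target operator identity. The paper does exactly this, case by case.

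One correction to your side remark, however. Your expansion of the weight-$\lambda$ Rota--Baxter identity as $N([x,N(y)]+[N(x),y])+\lambda N^{2}[x,y]$ carries one $N$ too many: the intended definition is $[N(x),N(y)]=N([x,N(y)]+[N(x),y]+\lambda[x,y])$, which expands to $N(\cdots)+\lambda N[x,y]$, and this is in fact what the paper uses in its own proof of (b) (the extra $N$ in the paper's displayed definition is a typo). Under the hypothesis $N^2=N$ your slip is invisible and the argument for (b) still goes through, but your closing claim that the hypothesis $N^2=N$ is unnecessary is wrong: with the correct expansion the Nijenhuis correction term is $-N^{2}[x,y]$ while the weight-$(-1)$ Rota--Baxter correction term is $-N[x,y]$, and these coincide only when $N^{2}=N$ on the range of the bracket.
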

\begin{proof}
\begin{enumerate}
\item Let $N^2=0.$ Suppose $N$ is a Nijenhuis operator. Then for any $x,y \in \mathfrak{g}$, we have 
\begin{align*}
N(x),N(y)]&=N([N(x),y]+[x,N(y)]- N[x,y])\\
&=N([N(x),y]+[x,N(y)])-N^2[x,y]\\
&=N([N(x),y]+[x,N(y)])
\end{align*}
Hence, $N$ is a Rota-Baxter operator. Proof of the converse part is similar.
 \item Let $N^2=N.$ Suppose $N$ is a Nijenhuis operator. Then  for any $x,y \in \mathfrak{g}$ we have 
\begin{align*}
N(x),N(y)]&=N([N(x),y]+[x,N(y)]- N[x,y])\\
&=N([N(x),y]+[x,N(y)])-N^2[x,y]\\
&=N([N(x),y]+[x,N(y)]-[x,y]).
\end{align*}
Hence, $N$ is a Rota-Baxter operator of weight $-1$. Similarly, the converse can be shown.
\item
Let $N^2=I.$ Suppose $N$ is a Nijenhuis operator. Then, for any $x,y \in \mathfrak{g}$, we have 
\begin{align*}
N(x),N(y)]&=N([N(x),y]+[x,N(y)]- N[x,y])\\
&=N([N(x),y]+[x,N(y)])-N^2[x,y]\\
&=N([N(x),y]+[x,N(y)])-[x,y].
\end{align*}
Hence, $N$ is a modified Rota-Baxter operator of weight $-1$. In a similar way the other cases can be shown.
\end{enumerate}
\end{proof}
\section{Representations of Nijenhuis Leibniz algebras}\label{chap3}
\begin{definition}(\cite{Loday})
Let $\mathfrak{g}$ be a Leibniz algebra. A $\mathbf{representation}$ of $\mathfrak{g}$ is a triple $(V,l_V,r_V)$ where $V$ is vector space together with linear maps (called the left and right $\mathfrak{g}$-actions respectively ) $l_V : \mathfrak{g}\otimes V \rightarrow V ~~\mbox{and}~~ r_V : V\otimes \mathfrak{g} \rightarrow V $ satisfying the following conditions

\[l_V(x,l_V(y,u))=l_V([x,y],u)+l_V(y,l_V(x,u))\]
\[l_V(x,r_V(u,y))=r_V(l_V(x,u),y)+r_V(u,[x,y])\]
\[ r_V(u,[x,y])= r_V(r_V(u,x),y)+l_V(x,r_V(u,y))\]
for all $x,y \in \mathfrak{g}$ and $u\in V.$

\end{definition}
\begin{definition}
Let $\mathfrak{g}_N$ be a Nijenhuis Leibniz algebra. A representation of $\mathfrak{g}_N$ is a quadruple $(V,l_V,r_V,N_V) $, where $(V,l_V,r_V)$ is a representation of the Leibniz algebra $\mathfrak{g}$ and $N_V: V \rightarrow V$ is a linear map satisfying the following conditions
\[l_V(Nx,N_V(u))=N_V\bigg(l_V(Nx,u)+l_V(x,N_V(u))-(N_V \circ l_V)(x,u)\bigg)\]
\[r_V(N_V(u),Nx)=N_V\bigg(r_V(N_V(u),x)+r_V(u,Nx)-(N_V \circ r_V)(u,x)\bigg)\]
for all $x \in \mathfrak{g}$ and $u \in V.$
\end{definition}

\begin{proposition} \label{prop coh}
Let $\mathfrak{g}_N$ be a Nijenhuis Leibniz algebra with representation $(V,l_{V},r_{V},N_V)$. We define $l_V^{'} : \mathfrak{g} \otimes V \rightarrow V  ~, ~ r_{V}^{'} :  V  \otimes \mathfrak{g}  \rightarrow V $ respectively by 

\[l^{'}_V (x,u)=l_V(Nx,u)-N_V(l_V(x,u))+l_V(x,N_V(u))\] 
\[r^{'}_V(u,x)=r_V(u,Nx)-N_V(r_V(u,x))+r_V(N_V(u),x)\]

 for all $x \in {\mathfrak{g}} , u \in V $. Then $(V,l_{V}^{'},r_V^{'},N_V)$ will be a representation of the Nijenhuis Leibniz algebra $(\mathfrak{g}_N,[~,~]_*)$.
\end{proposition}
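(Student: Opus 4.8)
The plan is to avoid a direct five-fold verification and instead deduce everything from Proposition \ref{prop basic} by passing to the semidirect product Leibniz algebra. Recall that a representation of the Nijenhuis Leibniz algebra $(\mathfrak{g},[~,~]_*)$ amounts to five identities: the three Leibniz representation axioms for $(V,l'_V,r'_V)$ with respect to $[~,~]_*$, together with the two Nijenhuis compatibility conditions relating $N_V$ to $l'_V,r'_V$ and the operator $N$. Checking these by brute force is routine but long; the conceptual route packages them all at once.

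First I would form the semidirect product $\mathfrak{g}\ltimes V=\mathfrak{g}\oplus V$ with bracket $[(x,u),(y,v)]=([x,y],\,l_V(x,v)+r_V(u,y))$. It is a standard short computation that this is a Leibniz algebra precisely because $(V,l_V,r_V)$ is a representation of $\mathfrak{g}$, the three representation axioms being exactly the $V$-components of the Leibniz identity. Next I would set $\tilde N:=N\oplus N_V$, $(x,u)\mapsto(Nx,N_V u)$, and verify that $\tilde N$ is a Nijenhuis operator on $\mathfrak{g}\ltimes V$. Expanding the Nijenhuis equation for $\tilde N$ and separating the $\mathfrak{g}$- and $V$-components, the $\mathfrak{g}$-component is exactly the Nijenhuis condition for $N$ on $\mathfrak{g}$, while the $V$-component splits into an $l_V$-flavored part and an $r_V$-flavored part, which are precisely the two Nijenhuis representation conditions in the definition of $(V,l_V,r_V,N_V)$. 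Hence $\tilde N$ is Nijenhuis on $\mathfrak{g}\ltimes V$.

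Now I would invoke Proposition \ref{prop basic}: the deformed bracket $[~,~]_*$ associated to $\tilde N$ makes $\mathfrak{g}\ltimes V$ a Leibniz algebra on which $\tilde N$ is again Nijenhuis. The decisive step is to compute this deformed bracket componentwise. A direct expansion gives $[(x,u),(y,v)]_*=([x,y]_*,\,l'_V(x,v)+r'_V(u,y))$, so the deformed semidirect product is exactly the semidirect product of $(\mathfrak{g},[~,~]_*)$ with the proposed actions $l'_V,r'_V$. Reading Proposition \ref{prop basic} back through the two correspondences of the previous paragraph then yields what we want: part (1) gives the three Leibniz representation axioms for $(V,l'_V,r'_V)$ over $[~,~]_*$, and part (2) gives the two Nijenhuis representation conditions for $(V,l'_V,r'_V,N_V)$. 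This finishes the argument.

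The main obstacle is purely bookkeeping: confirming that the Leibniz and Nijenhuis data on $V$ correspond exactly to the $V$-components of the respective data on $\mathfrak{g}\ltimes V$, and in particular verifying the componentwise formula for $[~,~]_*$ by matching the $l_V$- and $r_V$-flavored terms against the definitions of $l'_V$ and $r'_V$. Once these identifications are in place the result is immediate, requiring no use of the representation axioms beyond what Proposition \ref{prop basic} has already absorbed. (If one prefers to avoid the semidirect product, the same five identities can be checked directly by substituting the definitions of $l'_V,r'_V$ and repeatedly applying the representation axioms and the two Nijenhuis conditions, but this is considerably longer and less transparent.)
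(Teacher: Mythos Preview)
Your argument is correct and takes a genuinely different route from the paper. The paper proceeds by direct expansion: it writes out $l_V'(x,l_V'(y,u))-l_V'([x,y]_*,u)-l_V'(y,l_V'(x,u))$ in full, regroups the dozens of terms, and cancels them using the Leibniz representation axioms and the two Nijenhuis compatibility conditions; it then states that the remaining four identities are handled ``similarly.'' Your semidirect-product method replaces all of this with a single appeal to Proposition \ref{prop basic}. The two correspondences you use --- that $(V,l_V,r_V)$ is a Leibniz representation iff $\mathfrak{g}\ltimes V$ is Leibniz, and that $(V,l_V,r_V,N_V)$ is a Nijenhuis representation iff $N\oplus N_V$ is Nijenhuis on $\mathfrak{g}\ltimes V$ --- are easily verified by taking test elements of the form $(x,0)$ and $(0,u)$, and your componentwise computation of the deformed bracket on $\mathfrak{g}\ltimes V$ is exactly right. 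What your approach buys is that all five identities are obtained simultaneously and without any delicate term-tracking; what the paper's approach buys is self-containment (no semidirect-product machinery is introduced) at the price of a much longer and less illuminating calculation. Either way the result follows, but your argument makes clear \emph{why} the proposition holds rather than merely confirming that it does.
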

\begin{proof}
For any $x,y \in \mathfrak{g}$ and $u \in V$, we have
\begin{align*}
&l_V^{'}(x,l_V^{'}(y,u))-l_V^{'}([x,y]_*,u)-l_V^{'}(y,l_V^{'}(x,u))\\
&=l_V(Nx,l_V^{'}(y,u))-(N_V \circ l_V)(x,l_V^{'}(y,u))+l_V(x,(N_V \circ l_V^{'})(y,u))\\
&-l_V(N[x,y]_*,u)+(N_V \circ l_V)([x,y]_*,u)-l_V([x,y]_*,N_V(u))\\
&-l_V(Ny,l_V^{'}(x,u))+(N_V \circ l_V)(y,l_V^{'}(x,u))-l_V(y,(N_V \circ l_V^{'})(x,u))\\
&=l_V(Nx,l_V(Ny,u))-l_V(Nx,(N_V \circ l_V)(y,u))+l_V(Nx,l_V(y,N_V(u)))\\
&-(N_V \circ l_V)(x,l_V(Ny,u))+(N_V \circ l_V)(x,(N_V \circ l_V)(y,u))-(N_V \circ l_V)(x,l_V(y,N_V(u)))\\
&+l_V(x,(N_V \circ l_V)(Ny,u))-l_V(x,(N_V \circ (N_V \circ l_V))(y,u))+l_V(x,(N_V \circ l_V)(y,N_V(u)))\\
&-l_V([Nx,Ny],u)+(N_V \circ l_V)([Nx,y],u)+(N_V \circ l_V)([x,Ny],u)-(N_V \circ l_V)(N[x,y],u)\\
&-l_V([Nx,y],N_V(u))-l_V([x,Ny],N_V(u))+l_V(N[x,y],N_V(u))\\
&-l_V(Ny,l_V(Nx,u))+l_V(Ny,(N_V \circ l_V)(x,u))-l_V(Ny,l_V(x,N_V(u)))\\
&+(N_V \circ l_V)(y,l_V(Nx,u))-(N_V \circ l_V)(y,(N_V \circ l_V)(x,u))+(N_V \circ l_V)(y,l_V(x,N_V(u)))\\
&-l_V(y,(N_V \circ l_V)(Nx,u))+l_V(y,(N_V \circ(N_V \circ l_V))(x,u))-l_V(y,(N_V \circ l_V)(x,N_V(u)))\\
&=\bigg(l_V(Nx,l_V(Ny,u))-l_V([Nx,Ny],u)-l_V(Ny,l_V(Nx,u))\bigg)\\
&-l_V(Nx,(N_V \circ l_V)(y,u))+l_V(Nx,l_V(y,N_V(u)))\\
&-(N_V \circ l_V)(x,l_V(Ny,u))+(N_V \circ l_V)(x,(N_V \circ l_V)(y,u))-(N_V \circ l_V)(x,l_V(y,N_V(u)))\\
&+l_V(x,(N_V \circ l_V)(Ny,u))-l_V(x,(N_V \circ (N_V \circ l_V))(y,u))+l_V(x,(N_V \circ l_V)(y,N_V(u)))\\
&+(N_V \circ l_V)([Nx,y],u)+(N_V \circ l_V)([x,Ny],u)-(N_V \circ l_V)(N[x,y],u)\\
&-l_V([Nx,y],N_V(u))-l_V([x,Ny],N_V(u))+l_V(N[x,y],N_V(u))\\
&+l_V(Ny,(N_V \circ l_V)(x,u))-l_V(Ny,l_V(x,N_V(u)))\\
&+(N_V \circ l_V)(y,l_V(Nx,u))-(N_V \circ l_V)(y,(N_V \circ l_V)(x,u))+(N_V \circ l_V)(y,l_V(x,N_V(u)))\\
&-l_V(y,(N_V \circ l_V)(Nx,u))+l_V(y,(N_V \circ(N_V \circ l_V))(x,u))-l_V(y,(N_V \circ l_V)(x,N_V(u)))\\
&=-(N_V \circ l_V)(Nx, l_V(y,u))-(N_V \circ l_V)(x,(N_V \circ l_V)(y,u))+(N_V \circ ( N_V \circ l_V))(x,l_V(y,u))\\
&+l_V(Nx,l_V(y,N_V(u)))-(N_V \circ l_V)(x,l_V(Ny,u))+(N_V \circ l_V)(x,(N_V \circ l_V)(y,u))\\
&-(N_V \circ l_V)(x,l_V(y,N_V(u)))+l_V(x,(N_V \circ l_V)(Ny,u))-l_V(x,(N_V \circ (N_V \circ l_V))(y,u))\\
&l_V(x,(N_V \circ l_V)(y,N_V(u)))+(N_V \circ l_V)([Nx,y],u)+(N_V \circ l_V)([x,Ny],u)\\
&-(N_V \circ l_V)(N[x,y],u)-l_V([Nx,y],N_V(u))-l_V([x,Ny],N_V(u))\\
&(N_V \circ l_V)(N[x,y],u)+(N_V \circ l_V)([x,y],N_V(u))-(N_V \circ (N_V \circ l_V))([x,y],u)\\
&(N_V \circ l_V)(Ny,l_V(x,u))+(N_V \circ l_V)(y,(N_V \circ l_V)(x,u))-(N_V \circ (N_V \circ l_V))(y,l_V(x,u))\\
&-l_V(Ny,l_V(x,N_V(u)))+(N_V \circ l_V)(y,l_V(Nx,u))-(N_V \circ l_V)(y,(N_V \circ l_V)(x,u))\\
&+(N_V \circ l_V)(y,l_V(x,N_V(u)))-l_V(y,(N_V \circ l_V)(Nx,u))+l_V(y,(N_V \circ (N_V \circ l_V))(x,u))\\
&-l_V(y,(N_V \circ l_V)(x,N_V(u)))\\
\end{align*}
\begin{align*}
=&\bigg(-(N_V \circ l_V)(Nx, l_V(y,u))+(N_V \circ l_V)([Nx,y],u)+(N_V \circ l_V)(y,l_V(Nx,u)) \bigg)\\
&+\bigg ( -(N_V \circ l_V)(x,(N_V \circ l_V)(y,u))+(N_V \circ l_V)(x,(N_V \circ l_V)(y,u)) \bigg )\\
&+\bigg ((N_V \circ ( N_V \circ l_V))(x,l_V(y,u))- ((N_V \circ ( N_V \circ l_V))([x,y],u)-((N_V \circ ( N_V \circ l_V))(y,l_V(x,u))\bigg)\\
&+\bigg (-( N_V \circ l_V)(N[x,y],u)+( N_V \circ l_V)(N[x,y],u)  \bigg )\\
&+\bigg (( N_V \circ l_V)(y,( N_V \circ l_V)(x,u))-( N_V \circ l_V)(y,( N_V \circ l_V)(x,u)) \bigg )\\
&+\Bigg ( \bigg( l_V(Nx,l_V(y,N_V(u)))-l_V([Nx,y],N_V(u)) \bigg)+\\
& \bigg ( -l_V(y,(N_V \circ l_V)(Nx,u))+l_V(y,(N_V \circ (N_V \circ l_V))(x,u))-l_V(y,(N_V \circ l_V)(x,N_V(u)))\bigg )\Bigg)\\
&+\bigg( -(N_V \circ l_V)(x,l_V(y,N_V(u)))+(N_V \circ l_V)([x,y],N_V(u))+(N_V \circ l_V)(y,l_V(x,N_V(u))) \bigg)\\
&+\bigg(- (N_V \circ l_V)(x,l_V(Ny,u))+(N_V \circ l_V)([x,Ny],u)+(N_V \circ l_V)(Ny,l_V(x,u))\bigg)\\
&+\Bigg( \bigg ( - l_V ([x,Ny],N_V(u))-l_V(Ny,l_V(x,N_V(u)))\bigg )+\\
& \bigg ( l_V(x,(N_V \circ l_V)(Ny,u))-l_V(x,(N_V \circ (N_V \circ l_V))(y,u))+l_V(x,(N_V \circ l_V)(y,N_V(u))) \bigg ) \Bigg)\\
&=0.\\
\end{align*}

Therefore,
\[l_V^{'}(x,l_V^{'}(y,u))=l_V^{'}([x,y]_{*},u)+l_V^{'}(y,l_V^{'}(x,u)).\]

Similarly, it can be shown that the following equations holds for all $x,y \in \mathfrak{g}, u \in V.$ 
\[l_V^{'}(x,r_V^{'}(u,y))=r_V^{'}(l_V^{'}(x,u),y)+r_V^{'}(u,[x,y]_{*}),\]
\[ r_V^{'}(u,[x,y]_{*})= r_V^{'}(r_V^{'}(u,x),y)+l_V^{'}(x,r_V^{'}(u,y)).\]

Thus, $(V,l_V^{'},r_V^{'} )$ is a representation of the Leibniz algebra $(\mathfrak{g}_N,[~,~]_*)$.
Observe that, we also have

\begin{align*}
&l_V^{'}(Nx,N_V(u))\\
&=l_V(N(Nx),N_V(u))-(N_V \circ l_V)(Nx,N_V(u))+l_V(Nx,N_V(N_V(u)))\\
&=(N_V \circ l_V)(N(N(x)),u)+(N_V \circ l_V)(Nx,N_V(u))-(N_V \circ (N_V \circ l_V))(Nx,u)\\
&-N_V \bigg ( (N_V \circ l_V)(Nx,u)+(N_V \circ l_V)(x, N_V(u))- (N_V \circ (N_V \circ l_V))(x,u) \bigg )\\
&+(N_V \circ l_V)(Nx,N_V(u))+(N_V \circ l_V)(x,N_V(N_V(u)))-(N_V \circ (N_V \circ l_V))(x,N_V(u))\\
&=N_V \bigg( l_V(N(Nx),u)+l_V(Nx,N_V(u))-(N_V \circ l_V)(Nx,u)\\
&-(N_V \circ l_V)(Nx,u)-(N_V \circ l_V)(x,N_V(u))+(N_V \circ (N_V \circ l_V))(x,u)\\
&+l_V(Nx,N_V(u))+l_V(x,N_V(N_V(u)))-(N_V \circ l_V)(x,N_V(u))\bigg )\\
&=N_V\bigg ( l_V(N(Nx),u)-(N_V \circ l_V)(Nx,u)+l_V(Nx,N_V(u))\\&+l_V(Nx,N_V(u))-(N_V \circ l_V)(x,N_V(u))+l_V(x,N_V(N_V(u)))\\
&-(N_V \circ l_V)(Nx,u)+(N_V \circ (N_V \circ l_V))(x,u)-(N_V \circ l_V)(x,N_V(u))\bigg)\\
&=N_V\bigg( l_V(N(Nx),u)-(N_V \circ l_V)(Nx,u)+l_V(Nx,N_V(u))\bigg)\\
&+N_V \bigg (  l_V(Nx, N_V(u))-(N_V \circ l_V)(x,N_V(u))+l_V(x,N_V(N_V(u)))\bigg )\\
&-(N_V \circ N_V)\bigg(l_V (Nx,u)-(N_V \circ l_V)(x,u)+l_V(x,N_V(u))\bigg )\\
&=(N_V \circ l_V^{'})(Nx,u)+(N_V \circ l_V^{'})(x,N_V(u))-(N_V \circ (N_V \circ l_V^{'}))(x,u).\\
\end{align*}
Hence, 
\[ l_V^{'}(N(x),N_V(u))=N_V(l_V^{'}(N(x),u)+l_V^{'}(x,N_V(u)))\]
holds for all $a \in \mathfrak{g}$ and $u \in V$. Similarly, for all $x \in \mathfrak{g}$ and $u \in V$, the following equation holds:
\[r_V^{'}(N_V(u),N(x))=N_V(r_V^{'}(N_V(u),x)+r_V^{'}(u,N(x))).\]

Hence, $(V,l_V^{'},r_V^{'},N_V )$ is a representation of the Nijenhuis Leibniz algebra $(\mathfrak{g}_N,[~,~]_*).$

\end{proof}

\section{Cohomology of Nijenhuis Leibniz algebras}\label{chap4}
Consider a Leibniz algebra $\mathfrak{g}$ with a representation $(V,l_V,r_V).$  Now, for any $n \geq 0$, we define an abelian group $C^n_{LA}(\mathfrak{g},V):=Hom(\mathfrak{g}^{\otimes n},V)$ and a map $\delta ^n: C^n_{LA}(\mathfrak{g},V) \rightarrow C^{n+1}_{LA}(\mathfrak{g},V)$  defined by
\begin{align*}
 &(\delta^n(f))(x_1,x_2,\ldots ,x_{n+1})\\
 & =\sum_{i=1}^{n}(-1)^{i+1}l_{V}(x_i,f(x_1,\ldots,\hat{x_i},\ldots, x_{n+1}))+(-1)^{n+1}r_{V}(f(x_1,\ldots,x_n),x_{n+1}) \\
 &+ \sum_{1 \leq i <j \leq n+1}(-1)^if(x_1,\ldots, \hat{x_i},\ldots ,x_{j-1},[x_i,x_j],x_{j+1},\ldots, x_{n+1}),
 \end{align*}
where $f\in C^n_{LA}(\mathfrak{g},V)$ and $x_1,\ldots,x_{n+1} \in \mathfrak{g}$. 

It is well known that $\{C_{LA}^{n}(\mathfrak{g},V),\delta^n \}$ is a cochain complex (For details see Loday-Pirashvili cohomology  for Leibniz algebra in \cite{Loday}). The corresponding cohomology groups are called the cohomology of $\mathfrak{g}$ with coefficients in the representation $V$ and the $n$th cohomology group is denoted by $H^n_{LA}(\mathfrak{g},V).$
We will  follow the notation $l_{V}(x,u)=[x,u] $ and $r_{V}(u,x)=[u,x]$ for all $x\in \mathfrak{g},~u \in V$  . Then the above coboundary map  $\delta ^n: C^n_{LA}(\mathfrak{g},V) \rightarrow C^{n+1}_{LA}(\mathfrak{g},V)$ becomes
 \begin{align*}
 (\delta^n(f))(x_1,x_2,\ldots ,x_{n+1})\\
 & =\sum_{i=1}^{n}(-1)^{i+1}[x_i,f(x_1,\ldots,\hat{x_i},\ldots, x_{n+1})]+(-1)^{n+1}[f(x_1,\ldots,x_n),x_{n+1}] \\
 &+ \sum_{1 \leq i <j \leq n+1}(-1)^if(x_1,\ldots, \hat{x_i},\ldots ,x_{j-1},[x_i,x_j],x_{j+1},\ldots, x_{n+1}),
  \end{align*}
where $f\in C^n_{LA}(\mathfrak{g},V)$ and $x_1,\ldots,x_{n+1} \in \mathfrak{g}$. \\
Let $\mathfrak{g}_N$ be a Nijenhuis Leibniz algebra with representation $(V,l_V,r_V,N_V)$. Now by Proposition \ref{prop basic} and \ref{prop coh}, we get a new Nijenhuis Leibniz algebra $(\mathfrak{g}_N,[~,~]_*)$ with representation $(V,l^{'}_V,r_V^{'},N_V)$ induced by the Nijenhuis operator $N$. We consider the Loday-Pirashvili cochain complex of this induced Leibniz algebra $(\mathfrak{g},[~,~]_*)$ with representation $(V,l^{'}_V,r_V^{'})$
as follows:

For any  $n \geq 0$, define cochain groups $C^n_{NO}(\mathfrak{g},V):=Hom(\mathfrak{g}^{\otimes n},V)$ and  boundary map $\partial ^n: C^n_{NO}(\mathfrak{g},V) \rightarrow C^{n+1}_{NO}(\mathfrak{g},V)$  by
\begin{align*}
 &(\partial^n(f))(x_1,x_2,\ldots ,x_{n+1})\\
 &=\sum_{i=1}^{n}(-1)^{i+1}l^{'}_{V}(x_i,f(x_1,\ldots,\hat{x_i},\ldots, x_{n+1}))+(-1)^{n+1}r^{'}_{V}(f(x_1,\ldots,x_n),x_{n+1})\\
& + \sum_{1 \leq i <j \leq n+1}(-1)^if(x_1,\ldots, \hat{x_i},\ldots ,x_{j-1},[x_i,x_j]_{*},x_{j+1},\ldots, x_{n+1}),\\
&=\sum _{i=1}^{n}(-1)^{i+1}[Nx_i,f(x_1,\ldots,\hat{x_i},\ldots, x_{n+1})] -\sum _{i=1}^{n}(-1)^{i+1}N_V([x_i,f(x_1,\ldots,\hat{x_i},\ldots, x_{n+1})])\\
&+\sum _{i=1}^{n}(-1)^{i+1}[x_i,N_V(f(x_1,\ldots,\hat{x_i},\ldots, x_{n+1}))]\\
&+(-1)^{n+1}[f (x_1,\ldots ,x_{n}),N(x_{n+1})] -(-1)^{n+1}N_V([f (x_1,\ldots ,x_{n}),x_{n+1}])\\
&+(-1)^{n+1}[N_V(f (x_1,\ldots ,x_{n})),x_{n+1}]\\
&+\sum_{1\leq i< j\leq n+1}(-1)^if (x_1,\ldots,\hat{x_i},\ldots,x_{j-1},[Nx_i,x_j]_{\mathfrak{g}}+[x_i,Nx_j]_{\mathfrak{g}}-N([x_i,x_j]_{\mathfrak{g}}),x_{j+1},\ldots ,x_{n+1})\\
 \end{align*}
where $f\in C^n_{NO}(\mathfrak{g},V)$ and $x_1,\ldots,x_{n+1} \in \mathfrak{g}$.\\
The above map satisfies the condition $\partial^{n+1} \circ \partial^n=0$ as it a coboundary map for the induced Leibniz algebra $(\mathfrak{g},[~,~]_*).$
 Therefore, $\{C^{n}_{NO}(\mathfrak{g},V),\partial^n\}$ is a cochain complex. This cochain complex is called the cochain complex of the Nijenhuis operator $N$ and  the corresponding cohomology groups are called the cohomology of Nijenhuis operator $N$ with coefficients in the representation $V$ and is denoted by $H^n_{NO}(\mathfrak{g},V).$

\begin{definition}
Let $(\mathfrak{g}_N, [~,~])$ be a nijenhuis Leibniz algebra with a representation $(V,l_V,r_V,N_V)$. We define a map for $n\geq 1,$
 $\phi^n : C^n_{LA}(\mathfrak{g},V) \rightarrow C^n_{NO}(\mathfrak{g},V)$ 
 \begin{align*}
\mbox{by}~~\phi^n(f)(x_1,x_2,\ldots,x_n)&=f(Nx_1,Nx_2,\ldots, Nx_n)- (N_V\circ f)(x_1,Nx_2,\ldots,Nx_n)\\
&-(N_V\circ f)(Nx_1,x_2,Nx_3,\ldots,Nx_n) -\ldots -(N_V\circ f)(Nx_1,Nx_2,\ldots,Nx_{n-1},x_n)\\
&+N_V^2(f(x_1,x_2,\ldots,x_n)).
\end{align*}
and $\phi_0=Id_V.$
\end{definition}
\begin{lemma}
 $\phi^{n+1}(\delta^n(f))(x_1,x_2,x_3,\ldots,x_{n+1})=\partial^n(\phi^n(f))(x_1,x_2,x_3,\ldots,x_{n+1})$, where $f\in C^n_{LA}(\mathfrak{g},V)$ and $x_1,\ldots,x_{n+1} \in \mathfrak{g}$. 
\end{lemma}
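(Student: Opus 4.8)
My plan is to establish the chain-map identity $\phi^{n+1}\circ\delta^n=\partial^n\circ\phi^n$ by a direct expansion of both sides, powered by three structural inputs from earlier in the paper. The first is the Nijenhuis identity rewritten as $N([x,y]_*)=[Nx,Ny]$, which follows immediately from $[x,y]_*=[Nx,y]+[x,Ny]-N[x,y]$ together with the defining relation of $N$; this says exactly that $N$ intertwines the two brackets. The second is the explicit form of the induced actions $l'_V,r'_V$ from Proposition \ref{prop coh}. The third is the pair of representation-compatibility conditions for $N_V$, namely $l_V(Nx,N_V(u))=N_V\big(l_V(Nx,u)+l_V(x,N_V(u))-N_V l_V(x,u)\big)$ and its right-handed analogue $r_V(N_V(u),Nx)=N_V\big(r_V(N_V(u),x)+r_V(u,Nx)-N_V r_V(u,x)\big)$. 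Conceptually one expects this to work because, by Propositions \ref{prop basic} and \ref{prop coh}, $\partial^n$ is precisely the Loday--Pirashvili differential of the deformed algebra $(\mathfrak{g},[~,~]_*)$ with representation $(V,l'_V,r'_V)$, and $\phi^n$ is a pullback of cochains along the intertwiner $N$, twisted by $N_V$ so as to land in the primed representation; the lemma is the assertion that this twisting is compatible with the differentials.

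Concretely, I would expand the left-hand side by first writing $\delta^n f$ as its three groups of terms --- the left-action terms $[x_i,f(\ldots\widehat{x_i}\ldots)]$, the single right-action term $[f(x_1,\ldots,x_n),x_{n+1}]$, and the bracket-insertion terms $f(\ldots,[x_i,x_j],\ldots)$ --- and then applying $\phi^{n+1}$ to each, which substitutes the inputs by $Nx_k$ according to the prescribed pattern of omitted $N$'s and inserts the corresponding powers of $N_V$ on the outside. For the right-hand side I would set $g=\phi^n f$, expand $\partial^n g$ using the definitions of $l'_V,r'_V$ and $[~,~]_*$ (each contributing three subterms), and then substitute the $n+2$ summands of $g=\phi^n f$. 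After these substitutions both sides become large alternating sums whose summands are all of one of two shapes: a power of $N_V$ applied to a single action of $\mathfrak{g}$ on a value of $f$, with various inputs carrying $N$; or a power of $N_V$ applied to $f$ evaluated with one bracketed argument, again with various inputs carrying $N$.

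The matching is then organized by bookkeeping the number of inputs that carry $N$ together with the exponent of $N_V$. The bracket-insertion terms reconcile through the Nijenhuis identity: the term $f(\ldots,[Nx_i,Nx_j],\ldots)$ appearing on the left is the $N$-pullback of $f(\ldots,[x_i,x_j]_*,\ldots)$ appearing on the right, since $[Nx_i,Nx_j]=N([x_i,x_j]_*)$, while the lower-order variants account for the remaining pieces of $[x_i,x_j]_*=[Nx_i,x_j]+[x_i,Nx_j]-N[x_i,x_j]$. The action terms reconcile through the representation conditions for $N_V$: every mixed expression such as $l_V(Nx_i,N_V f(\ldots))$ or $r_V(N_V f(\ldots),Nx_{n+1})$ is exactly the combination that those identities collapse, so that the surviving action contributions on the two sides coincide slot by slot.

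The hard part will not be any individual identity but the combinatorial reconciliation: one must verify that, after invoking the three inputs above, all the ``cross terms'' (those carrying an unexpected number of $N$'s on the inputs or an unexpected power of $N_V$) cancel in pairs, and that the signs and the $N_V$-exponents generated by the $\phi$-pattern agree on both sides. To keep this honest I would first carry out $n=1$ and $n=2$ in full, both to pin down the sign conventions and to confirm the cancellation mechanism, and only then run the general argument while tracking every summand by its bidegree (number of $N$'s on the inputs, power of $N_V$), so that the left- and right-hand contributions can be compared block by block. Since $\delta$ and $\partial$ are genuine Leibniz coboundary operators, no separate check of $\partial^{n+1}\circ\partial^n=0$ is required here; only the intertwining relation must be established.
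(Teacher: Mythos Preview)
Your plan is sound and matches what the paper does: the paper gives no argument of its own here, simply declaring the proof ``exactly similar to Lemma~4.2 of \cite{MS}'', which is precisely the direct term-by-term expansion you propose, powered by the Nijenhuis identity $N([x,y]_*)=[Nx,Ny]$, the formulas for $l'_V,r'_V$ from Proposition~\ref{prop coh}, and the $N_V$-compatibility conditions. Your bookkeeping scheme (bidegree by number of $N$'s on inputs and power of $N_V$) and your intention to anchor the signs by doing $n=1,2$ first are exactly the right way to keep the combinatorics honest.
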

\begin{proof}
The proof is exactly similar to Lemma 4.2 of \cite{MS}.
\end{proof}
From the above lemma we have the following commutative diagram

\[\begin{tikzcd}
	{C^1_{LA}(\mathfrak{g},V)} & {C^2_{LA}(\mathfrak{g},V)} & {C^n_{LA}(\mathfrak{g},V)} & {C^{n+1}_{LA}(\mathfrak{g},V)} & {} \\
	{C^1_{NO}(\mathfrak{g},V)} & {C^2_{NO}(\mathfrak{g},V)} & {C^n_{NO}(\mathfrak{g},V)} & {C^{n+1}_{NO}(\mathfrak{g},V)} & {}
	\arrow[from=1-1, to=1-2,]{r}{\delta^1}
	\arrow[from=1-1, to=2-1]{d}{\phi^1}
	\arrow[from=1-2, to=2-2]{d}{\phi^2}
	\arrow[from=2-1, to=2-2]{r}{\partial^1}
	\arrow[dotted, no head, from=1-2, to=1-3]
	\arrow[dotted, no head, from=2-2, to=2-3]
	\arrow[from=1-3, to=1-4]{r}{\delta^n}
	\arrow[from=1-3, to=2-3]{d}{\phi^n}
	\arrow[from=2-3, to=2-4]{d}{\partial^n}
	\arrow[from=1-4, to=2-4]{r}{\phi^{n+1}}
	\arrow[dotted, no head, from=1-4, to=1-5]
	\arrow[dotted, no head, from=2-4, to=2-5]
\end{tikzcd}\]

\begin{definition}
Let $\mathfrak{g}_N$ be a Nijenhuis Leibniz algebra with a representation  $(V,l_V,r_V,N_V)$. We define 
\[C^{0}_{NLA}(\mathfrak{g},V):= C^{0}_{LA}(\mathfrak{g},V) ~~ \mbox{and}~~ C^n_{NLA}(\mathfrak{g},V):= C^n_{LA}(\mathfrak{g},V)\oplus C_{NO}^{n-1}(\mathfrak{g},V), \forall n \geq 1,\] and we also define a map
$d^n: C^n_{NLA}(\mathfrak{g},V) \rightarrow C^{n+1}_{NLA}(\mathfrak{g},V)$  by
\[d^n (f , g)=(\delta^n(f),- \partial^{n-1}(g)-\phi^n(f) )\]
for any $f \in C^n_{LA}(\mathfrak{g},V)$ , $g \in C^{n-1}_{NO}(\mathfrak{g},V).$
\end{definition}

\begin{theorem}
The map $d^n : C^n_{NLA} \rightarrow C^{n+1}_{NLA}$ satisfies $d^n \circ d^{n+1}=0.$
\end{theorem}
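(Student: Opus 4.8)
The plan is to verify the identity directly on an arbitrary element of $C^n_{NLA}(\mathfrak{g},V)=C^n_{LA}(\mathfrak{g},V)\oplus C^{n-1}_{NO}(\mathfrak{g},V)$, thereby reducing the assertion (the vanishing of the composite of two consecutive coboundary maps, $d^{n+1}\circ d^n=0$) to three facts that are already in hand: that $\delta$ is a coboundary operator, that $\partial$ is a coboundary operator, and that the squares in the diagram above commute (the preceding Lemma). First I would take a pair $(f,g)$ with $f\in C^n_{LA}(\mathfrak{g},V)$ and $g\in C^{n-1}_{NO}(\mathfrak{g},V)$ and apply the definition of $d^n$ to obtain $d^n(f,g)=(\delta^n f,\,-\partial^{n-1}g-\phi^n f)$, which lies in $C^{n+1}_{LA}(\mathfrak{g},V)\oplus C^n_{NO}(\mathfrak{g},V)=C^{n+1}_{NLA}(\mathfrak{g},V)$, so that $d^{n+1}$ can be applied to it. Writing $F:=\delta^n f$ and $G:=-\partial^{n-1}g-\phi^n f$, the definition gives
\[ d^{n+1}\big(d^n(f,g)\big)=\big(\delta^{n+1}(F),\;-\partial^{n}(G)-\phi^{n+1}(F)\big). \]

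For the first component I would invoke $\delta^{n+1}\circ\delta^n=0$, which holds because $\{C^\bullet_{LA}(\mathfrak{g},V),\delta\}$ is the Loday--Pirashvili cochain complex; hence $\delta^{n+1}(F)=\delta^{n+1}(\delta^n f)=0$. For the second component I would substitute the definition of $G$ and expand, so that it becomes
\[ -\partial^{n}(G)-\phi^{n+1}(F)=\partial^{n}\partial^{n-1}(g)+\partial^{n}(\phi^n f)-\phi^{n+1}(\delta^n f). \]
Here the first term vanishes because $\{C^\bullet_{NO}(\mathfrak{g},V),\partial\}$ is itself a cochain complex (it is the Loday--Pirashvili complex of the induced Leibniz algebra $(\mathfrak{g},[~,~]_*)$ with its induced representation), giving $\partial^{n}\circ\partial^{n-1}=0$; and the two remaining terms cancel by the preceding Lemma, $\phi^{n+1}\circ\delta^n=\partial^{n}\circ\phi^n$. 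Thus both components vanish and $d^{n+1}\circ d^n=0$.

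I do not expect any genuine obstacle here beyond careful bookkeeping of indices and signs: all the substantive content has already been relegated to the two complex identities and the commutativity Lemma. The one point worth watching is that the sign convention $d^n(f,g)=(\delta^n f,\,-\partial^{n-1}g-\phi^n f)$ is precisely what makes the cross terms $\partial^{n}(\phi^n f)$ and $\phi^{n+1}(\delta^n f)$ emerge with opposite signs after the double application, so that the Lemma can be used to cancel them; with a different sign choice these terms would instead add and the claim would fail, so I would double-check that the signs track correctly through the computation.
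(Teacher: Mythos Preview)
Your proposal is correct and follows essentially the same route as the paper's own proof: apply $d^{n+1}$ to $d^n(f,g)$, kill the first component using $\delta^{n+1}\circ\delta^n=0$, and in the second component use $\partial^n\circ\partial^{n-1}=0$ together with the preceding Lemma $\phi^{n+1}\circ\delta^n=\partial^n\circ\phi^n$ to cancel the remaining terms. Your write-up is in fact a bit more explicit than the paper's about where $\partial^n\circ\partial^{n-1}=0$ enters, but the argument is identical.
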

\begin{proof}
 Let, $f\in C^n_{LA}(\mathfrak{g},V) $ and $g\in C^{n-1}_{NO}(\mathfrak{g},V)$
\begin{align*}
&d^{n+1} \circ d^n(f,g)=d^{n+1}\bigg(\delta^n(f),-\partial^{n-1}(g)- \phi^n(f)\bigg)\\
&=\bigg ( \delta ^{n+1}(\delta ^n(f)), -\partial ^n  \bigg (\delta^n(f),-\partial^{n-1}(g)- \phi^n(f) \bigg )-\phi^{n+1}(\delta ^n(f)) \bigg )\\
&=\Bigg(0, \bigg (\partial^{n} (\phi^n(f))-\phi^{n+1}(\delta^n(f))\bigg)\Bigg ) ~~(~\mbox{by Lemma 3.2 })\\
&=\bigg(0,0\bigg).
\end{align*}
\end{proof}

Therefore, $\{C^n_{NLA}(\mathfrak{g},V),d^n\}$ forms a cochain complex. This cochain complex is called the cochain complex of the Nijenhuis Leibniz algebra $\mathfrak{g}_N$ with representation $(V,l_V,r_V,N_V)$. Suppose $Z^n_{NLA}$ denotes the space of $n$-th cocycle and $B^n_{NLA}$ denotes the space of $n$-th coboundaries. We define the quotient group  by 
\[H_{NLA}^n(\mathfrak{g},V):=\frac{Z_{NLA}^n(\mathfrak{g},V)}{B_{NLA}^n(\mathfrak{g},V)}
,~~ \mbox{for} ~n \geq 0.\]

We call the above quotient groups are the cohomology of Nijenhuis Leibniz algebra $\mathfrak{g}_N$ with representation $(V,l_V,r_V,N_V)$.

 \section{Deformation of Nijenhuis Leibniz algebra}\label{chap5}
 In this section, we study formal one-parameter deformation of Nijenhuis Leibniz algebra. We denote the bracket $[~,~]$ by $\mu.$

\begin{definition}
A one-parameter formal deformation of a Nijenhuis algebra $(\mathfrak{g}_N,\mu)$ is a pair of two power series  $(\mu_t,N_t)$
\[ \mu_t=\sum _{i=0}^{\infty}\mu_it^i, ~ \mu_{i} \in C^2_{LA}(\mathfrak{g},\mathfrak{g}),~~~~ N_t= \sum_{i=0}^{\infty} N_it^i,~ N_i \in C^1_{NO}(\mathfrak{g},\mathfrak{g}),\]
 such that $(\mathfrak{g}[[t]]_{N_t},\mu_t)$ is a Nijenhuis Leibniz algebra with $(\mu_0,N_{0})=(\mu ,N)$, where $\mathfrak{g}[[t]]$, the space of formal power series in $t$ with coefficients from $\mathfrak{g}$ is a $\mathbb{K}[[t]]$ module, $\mathbb{K}$ being the ground field of $(\mathfrak{g}_N,\mu)$.
\end{definition}
The above definition holds if and only if for any $x,y,z \in \mathfrak{g}$ the following conditions are satisfied 
\[\mu_t(x,\mu_t(y,z))=\mu_t(\mu_t(x,y),z)+\mu_t(y,\mu_t(x,z)),\]
and \[ \mu_t(N_t(x),N_t(y))=N_t\bigg(\mu_t(x,N_t(y))+\mu_t(N_t(x),y)-N_t(\mu_t (x,y))\bigg).\]

Expanding the above equations and equating the coefficients of $t^n$ from both sides we have 
\begin{align}\label{deform 1}
\sum _{\substack{i+j=n \\i,j\geq 0}} \mu_i(x,\mu_j (y,z))=\sum _{\substack{i+j=n \\i,j\geq 0}}\mu_i(\mu _j(x,y),z)+\sum _{\substack{i+j=n \\i,j\geq 0}} \mu_i(y, \mu_j (x,z)),
\end{align}
and 
\begin{align}\label{deform 2}
\sum_{\substack{i+j+k=n \\ i,j,k \geq 0}} \mu_i(N_j(u),N_k(v))&=\sum_{\substack{i+j+k=n \\ i,j,k \geq 0}}N_i(\mu_j(N_k(u),v))+\sum_{\substack{i+j+k=n \\ i,j,k \geq 0}}N_i(\mu_j(u,N_k(v)))\\
&-\sum_{\substack{i+j+k=n \\ i,j,k \geq 0}}N_iN_j(\mu_k(u,v)) \nonumber
\end{align}
Observe that, for $n=0$, the above conditions are exactly the conditions in the definitions of  Leibniz algebra and the Nijenhuis operator.
\begin{proposition}
Let $(\mu_t,N_t)$ be a one-parameter deformation of a Nijenhuis Leibniz algebra $(\mathfrak{g}_N,\mu)$. Then $(\mu_1,N_1)$ is a $2$- cocyle in the cochain complex $\{C^n_{NLA}(\mathfrak{g}),d^n\}.$
\end{proposition}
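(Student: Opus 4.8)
The plan is to use the direct-sum description of the complex. The relevant representation is the adjoint one, $V=\mathfrak{g}$ with $l_V=r_V=[~,~]$ and $N_V=N$, so the infinitesimal $(\mu_1,N_1)$ lies in $C^2_{NLA}(\mathfrak{g})=C^2_{LA}(\mathfrak{g},\mathfrak{g})\oplus C^1_{NO}(\mathfrak{g},\mathfrak{g})$ and, by the definition of $d^2$,
\[ d^2(\mu_1,N_1)=\big(\delta^2(\mu_1),\,-\partial^1(N_1)-\phi^2(\mu_1)\big). \]
Thus it is enough to show that each of the two components vanishes, and I would attack them one at a time.

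For the first component I would read off the coefficient of $t^1$ in \eqref{deform 1}. For $n=1$ only the index pairs $(i,j)\in\{(1,0),(0,1)\}$ contribute, and with $\mu_0=[~,~]$ the relation becomes an identity among the expressions $[x,\mu_1(y,z)]$, $\mu_1(x,[y,z])$, $[\mu_1(x,y),z]$, $\mu_1([x,y],z)$, $[y,\mu_1(x,z)]$ and $\mu_1(y,[x,z])$. Transposing everything to one side, this is exactly the expansion of $\delta^2(\mu_1)(x,y,z)$ given by the Loday--Pirashvili coboundary formula, so $\delta^2(\mu_1)=0$. This is the standard ``infinitesimal of a Leibniz deformation is a $2$-cocycle'' computation and should be routine.

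The substance of the proof is the second component, obtained from the coefficient of $t^1$ in \eqref{deform 2}. Here the surviving triples are $(1,0,0),(0,1,0),(0,0,1)$; expanding with $\mu_0=[~,~]$ and $N_0=N$ produces the terms $\mu_1(Nu,Nv)$, $[N_1u,Nv]$, $[Nu,N_1v]$ together with assorted $N$- and $N_1$-twisted brackets. I would then split these into two blocks. The block built solely from $\mu_1$, namely $\mu_1(Nu,Nv)-N(\mu_1(Nu,v))-N(\mu_1(u,Nv))+N^2(\mu_1(u,v))$, is exactly $\phi^2(\mu_1)(u,v)$ by the definition of $\phi^2$. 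The remaining $N_1$-block I would reorganize using
\begin{align*}
l'_V(x,w)&=[Nx,w]-N[x,w]+[x,Nw], & r'_V(w,x)&=[w,Nx]-N[w,x]+[Nw,x],\\
[x,y]_*&=[Nx,y]+[x,Ny]-N[x,y],
\end{align*}
so as to recognise it as the expansion of $\partial^1(N_1)(u,v)$. Adding the two blocks gives $\phi^2(\mu_1)+\partial^1(N_1)=0$, i.e.\ the vanishing of the second component.

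The main obstacle is precisely this last matching: one must check that every monomial appearing in the order-$t^1$ part of \eqref{deform 2} is accounted for exactly once, with the correct sign, among the terms of $\phi^2(\mu_1)$ and of $\partial^1(N_1)$, after the induced twists $l'_V$, $r'_V$ and $[~,~]_*$ have been fully unfolded. I would manage the bookkeeping by first peeling off the pure-$\mu_1$ block to pin down $\phi^2(\mu_1)$, then comparing the residual $N_1$-terms against $\partial^1(N_1)$ one monomial at a time, invoking the order-$0$ Nijenhuis identity for $(\mu,N)$ wherever an $N$-twisted bracket needs to be collapsed.
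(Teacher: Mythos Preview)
Your proposal is correct and follows essentially the same approach as the paper: both read off the $t^{1}$-coefficients of the two deformation equations \eqref{deform 1}--\eqref{deform 2}, identify the first with $\delta^{2}(\mu_{1})=0$, and rearrange the second so that the pure-$\mu_{1}$ terms assemble into $\phi^{2}(\mu_{1})$ and the $N_{1}$-terms into $\partial^{1}(N_{1})$, yielding $d^{2}(\mu_{1},N_{1})=0$. Your remark about possibly invoking the order-$0$ Nijenhuis identity is an extra precaution the paper does not take; otherwise the two arguments are the same.
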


\begin{proof}
First we put $n=1$ in equation \ref{deform 1}, we get 
\[ \mu (x,\mu_1(y,z))+\mu_1(x,\mu (y,z))= \mu (\mu_1(x,y),z)+\mu_1(\mu (x,y),z) \\
+\mu_1 (y, \mu (x,z))+\mu (y, \mu_1(x,z))\]
Hence, we have $\delta^2(\mu_1)(x,y,z)=0 \in C^2_{LA}(\mathfrak{g})$.\\
Now, putting $n=1$ in equation \ref{deform 2}, we get
\begin{align*}
&\mu_1(Nu,Nv)+\mu (N_1u,Nv)+\mu(Nu,N_1v)\\
&=N_1(\mu(Nu,v))+N(\mu_1(Nu,v))+N(\mu (N_1u,v))\\
& N_1(\mu (u,Nv))+N(\mu_1(u,Nv))+N(\mu (u,N_1v))\\
&-N_1N(\mu (u,v))-NN_1(\mu (u,v))-N^2(\mu (u,v)).
\end{align*}
Hence,
\begin{align*}
&-N_1N(\mu (u,v))-\mu (N_1u,Nv)-\mu (Nu,N_1v)+N_1(\mu (u,Nv))\\
&N_1(\mu (Nu,v))+N(\mu (N_1u,v))+N(\mu (u,N_1v))-\mu (u,NN_1v)-\mu (NN_1u,v)\\
&=\mu_1(Nu,Nv)-N(\mu_1(Nu,v))-N(\mu_1(u,Nv))+N^2(\mu_1(u,v))\\
&+NN_1 (\mu (u,v))-\mu (u,NN_1(v))-\mu (NN_1u,v).\\
\end{align*}
Therefore,
\[-\partial^1(N_1)(u,v)= \phi^2(\mu_1)(u,v).\]
or, \[-\partial^1(N_1)(u,v)- \phi^2(\mu_1)(u,v)=0.\]
Hence, $d^2(\mu_1,N_1)=0$. Therefore, $(\mu_1,N_1)$ is a $2$-cocycle in the cochain complex $C^n_{NLA}(\mathfrak{g}, \mathfrak{g}).$

\end{proof}

\begin{definition}
Let $(\mu_t, N_t)$ and $(\mu_t^{'},N_t^{'})$ be two deformation of a Nijenhuis Leibniz algebra $(\mathfrak{g}_N, \mu)$. A formal isomorphism from $(\mu_t, N_t)$ to $(\mu_t^{'},N_t^{'})$ is a power series $\psi_t = \sum _{i=0}\psi_it^i : \mathfrak{g}[[t]] \rightarrow \mathfrak{g}[[t]]$, where $\psi_i: \mathfrak{g} \rightarrow \mathfrak{g}$ are linear maps with $\psi_0= Id_{\mathfrak{g}}$ satisfying the following conditions
\begin{align*}
&\psi_t \circ \mu_t^{'}=\mu_t \circ (\psi_t \circ \psi_t),\\
&\psi_t \circ N_t^{'}=N_t \circ \psi_t.
\end{align*}
\end{definition}
Here we call $(\mu_t, N_t)$ and $(\mu_t^{'},N_t^{'})$ are equivalent. Now from the above conditions, we have
\begin{align}
\label{deform 3}&\sum_{\substack {i+j=n \\ i,j\geq 0}}\psi _i(\mu_j^{'}(x,y))=\sum_{\substack {i+j+k=n \\ i,j,k\geq 0}}\mu_i(\psi_j(x),\psi_k(y)),~~ x,y \in \mathfrak{g}.\\
\label{deform 4}& \sum_{\substack {i+j=n \\ i,j\geq 0}}\psi _i \circ N^{'}_j=\sum_{\substack {i+j=n \\ i,j\geq 0}} N_i \circ \psi _j.
\end{align}

\begin{theorem}\label{thm 4}
The infinitesimal of two equivalent formal one-parameter deformation of Nijenhuis Leibniz algebra $(\mathfrak{g}_N , \mu )$ are in the same cohomology class. 
\end{theorem}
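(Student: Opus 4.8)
The plan is to show that if $(\mu_t, N_t)$ and $(\mu_t', N_t')$ are equivalent via a formal isomorphism $\psi_t = \mathrm{Id} + \psi_1 t + \cdots$, then the infinitesimals $(\mu_1, N_1)$ and $(\mu_1', N_1')$ differ by a $1$-coboundary in the complex $\{C^n_{NLA}(\mathfrak{g},\mathfrak{g}), d^n\}$. Since by the previous proposition both infinitesimals are $2$-cocycles, it suffices to exhibit an element $(\xi, \eta) \in C^1_{NLA}(\mathfrak{g},\mathfrak{g}) = C^1_{LA}(\mathfrak{g},\mathfrak{g}) \oplus C^0_{NO}(\mathfrak{g},\mathfrak{g})$ with
\[
(\mu_1, N_1) - (\mu_1', N_1') = d^1(\xi, \eta).
\]
Recalling the definition $d^1(\xi,\eta) = (\delta^1(\xi), -\partial^0(\eta) - \phi^1(\xi))$, the natural candidate is $\xi = \psi_1 \in C^1_{LA}(\mathfrak{g},\mathfrak{g})$ and $\eta = 0$, so that I must verify the two component equations $\mu_1 - \mu_1' = \delta^1(\psi_1)$ and $N_1 - N_1' = -\phi^1(\psi_1)$.

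First I would extract the degree-one consequences of the equivalence relations. Setting $n=1$ in equation \ref{deform 3} and using $\psi_0 = \mathrm{Id}_{\mathfrak{g}}$, $\mu_0 = \mu$, I obtain
\[
\psi_1(\mu(x,y)) + \mu_1'(x,y) = \mu_1(x,y) + \mu(\psi_1(x), y) + \mu(x, \psi_1(y)),
\]
which rearranges precisely to $\mu_1(x,y) - \mu_1'(x,y) = \mu(\psi_1(x),y) + \mu(x,\psi_1(y)) - \psi_1(\mu(x,y))$. Comparing with the Loday--Pirashvili coboundary formula for $\delta^1$ on a $1$-cochain (with $\mathfrak{g}$ acting on itself via $\mu$), this right-hand side is exactly $\delta^1(\psi_1)(x,y)$, giving the first component equation. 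Similarly, setting $n=1$ in equation \ref{deform 4} with $N_0 = N$ yields
\[
\psi_1 \circ N + N_1' = N_1 + N \circ \psi_1,
\]
so $N_1 - N_1' = \psi_1 \circ N - N \circ \psi_1$.

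Next I would match the second equation against $-\phi^1(\psi_1)$. Here the only subtlety is bookkeeping: $\psi_1 \in C^1_{LA}(\mathfrak{g},\mathfrak{g})$ but $N_1, N_1' \in C^1_{NO}(\mathfrak{g},\mathfrak{g})$, and the map $\phi^1$ is the transition map between these complexes. From the defining formula for $\phi^n$ specialized to $n=1$ (with representation the adjoint one, so $N_V = N$), one has $\phi^1(\psi_1)(x) = \psi_1(Nx) - N(\psi_1(x)) + \cdots$; the expected identity is $\phi^1(\psi_1) = \psi_1 \circ N + N \circ \psi_1 - N \circ \psi_1$-type combination reducing to $N \circ \psi_1 - \psi_1 \circ N$, so that $-\phi^1(\psi_1) = \psi_1 \circ N - N \circ \psi_1 = N_1 - N_1'$, matching exactly. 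Combining both components gives $(\mu_1,N_1) - (\mu_1',N_1') = d^1(\psi_1, 0)$, hence the two infinitesimals are cohomologous.

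The main obstacle I anticipate is getting the $\phi^1$ term and its sign exactly right, since $\phi^n$ was defined by a somewhat lengthy alternating formula and its $n=1$ restriction must be computed carefully against the adjoint representation; an off-by-sign or a mismatch in how the $N^2$ and cross terms collapse would break the coboundary identification. A secondary, purely routine point is confirming that $\delta^1$ and $\partial^0$ here are the self-representation (adjoint) versions of the general coboundary maps, so that $\psi_1$ genuinely lives in $C^1_{NLA}(\mathfrak{g},\mathfrak{g})$ and $d^1(\psi_1,0)$ is well-defined; once the two component equations are verified this is immediate.
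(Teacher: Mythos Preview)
Your approach is essentially identical to the paper's: extract the degree-one relations from the equivalence equations \ref{deform 3} and \ref{deform 4} and identify the difference of infinitesimals as $d^1(\psi_1,0)$. One caveat on bookkeeping: your rearrangement has the sign reversed---from $\psi_1(\mu(x,y)) + \mu_1'(x,y) = \mu_1(x,y) + \mu(\psi_1(x),y) + \mu(x,\psi_1(y))$ one obtains $\mu_1' - \mu_1 = \delta^1(\psi_1)$ (not $\mu_1 - \mu_1'$), and likewise $N_1' - N_1 = N\circ\psi_1 - \psi_1\circ N = -\phi^1(\psi_1)$, so the paper writes $(\mu_1',N_1') - (\mu_1,N_1) = d^1(\psi_1,0)$; your concern about the exact form of $\phi^1$ is handled in the paper simply by asserting this identification.
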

\begin{proof}
Let $(\mu_t, N_t)$ and $(\mu_t^{'},N_t^{'})$ are two equivalent formal deformation of a Nijenhuis Leibniz algebra $(\mathfrak{g}_N , \mu )$ with a formal isomorphism $\psi_t : (\mu_t,N_t) \rightarrow  (\mu_t^{'},N_t^{'})$. Now puting $n=1$ in equation \ref{deform 3} and \ref{deform 4}, we get
\begin{align*}
&\mu^{'}_1(x,y)=\mu_1(x,y)+\mu (x,\psi_1 (y))+\mu (\psi_1(x),y)-\psi_1(\mu (x,y)) ,~~ x,y \in \mathfrak{g}\\
& N_1^{'}=N_1+N \circ \psi_1-\psi _1 \circ N.
\end{align*}
Hence,
\[(\mu_1^{'},N_1^{'})-(\mu_1,N_1)=(\delta^1(\psi_1),-\phi^1(\psi_1))=d^1(\psi_1,0) \in C^{1}_{NLA}(\mathfrak{g},\mathfrak{g}).\]
\end{proof}

\begin{theorem}
Let $(\mathfrak{g}_N,\mu)$ be a Nijenhuis Leibniz algebra. If $H^2_{NLA}(\mathfrak{g},\mathfrak{g})=0$, then $(\mathfrak{g}_N,\mu)$ is rigid.
\end{theorem}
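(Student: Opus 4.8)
The plan is to show that rigidity follows from the vanishing of the second cohomology by demonstrating that any formal one-parameter deformation $(\mu_t, N_t)$ of $(\mathfrak{g}_N, \mu)$ is equivalent to the trivial deformation $(\mu, N)$. The standard strategy is an inductive, order-by-order argument: I will show that whenever the deformation agrees with the trivial one up to some order, a suitable formal isomorphism can be applied to push the first nontrivial term one order higher, and then invoke completeness of the formal power series to conclude equivalence with the trivial deformation.

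First I would set up the base case. Suppose $(\mu_t, N_t)$ is a deformation with lowest-order nontrivial terms $(\mu_1, N_1)$ in degree one. By the proposition preceding this theorem, $(\mu_1, N_1)$ is a $2$-cocycle, so $d^2(\mu_1, N_1) = 0$. Since $H^2_{NLA}(\mathfrak{g}, \mathfrak{g}) = 0$, every such $2$-cocycle is a coboundary, whence there exists $(\psi_1, 0) \in C^1_{NLA}(\mathfrak{g}, \mathfrak{g})$ — more precisely some element of $C^1_{NLA}$ — with $(\mu_1, N_1) = d^1(\psi_1, \chi)$ for appropriate $\psi_1$ and $\chi$. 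Recalling from the computation in Theorem \ref{thm 4} that $d^1(\psi_1, 0) = (\delta^1(\psi_1), -\phi^1(\psi_1))$, I would use the formal isomorphism $\psi_t = \mathrm{Id}_{\mathfrak{g}} - \psi_1 t$ (equivalently $\mathrm{Id} + \psi_1 t$ depending on sign conventions) to transform $(\mu_t, N_t)$ into an equivalent deformation $(\mu_t', N_t')$ whose linear terms $(\mu_1', N_1')$ vanish. The key input here is exactly the equivalence formula derived in Theorem \ref{thm 4}, namely that the infinitesimals of $\psi_t$-equivalent deformations differ by $d^1(\psi_1, 0)$.

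Next I would carry out the inductive step. Assume $(\mu_t, N_t)$ satisfies $\mu_i = 0$ and $N_i = 0$ for $1 \le i \le m-1$, so the first possibly-nonzero correction sits in degree $m$. Extracting the coefficient of $t^m$ from the two deformation equations \eqref{deform 1} and \eqref{deform 2} — where all lower mixed terms drop out by the inductive hypothesis — shows that $(\mu_m, N_m)$ is again a $2$-cocycle, $d^2(\mu_m, N_m) = 0$. By $H^2_{NLA}(\mathfrak{g}, \mathfrak{g}) = 0$ we may write $(\mu_m, N_m) = d^1(\psi_m, 0)$, and the formal isomorphism $\psi_t = \mathrm{Id}_{\mathfrak{g}} - \psi_m t^m$ then yields an equivalent deformation whose corrections vanish through degree $m$. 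Iterating over all $m \ge 1$ and noting that each step alters only terms of degree $\ge m$, the limiting formal isomorphism (a well-defined element of $\mathrm{Id} + t\,\mathrm{End}(\mathfrak{g})[[t]]$) carries $(\mu_t, N_t)$ to the trivial deformation $(\mu, N)$; hence $(\mathfrak{g}_N, \mu)$ is rigid.

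The main obstacle I anticipate is verifying cleanly that the degree-$m$ coefficient of the deformation equations really reduces to the cocycle condition $d^2(\mu_m, N_m) = 0$ in the combined complex $C^\bullet_{NLA}$. For the Leibniz component this is the familiar Maslov-type identity giving $\delta^2(\mu_m) = 0$ modulo quadratic terms that vanish under the inductive hypothesis, but for the Nijenhuis component one must check that the coefficient of $t^m$ in \eqref{deform 2}, after discarding all products of two positive-degree terms, assembles exactly into $-\partial^1(N_m) - \phi^2(\mu_m) = 0$, matching the second slot of $d^2(\mu_m, N_m)$. This is the same bookkeeping already performed for $m = 1$ in the preceding proposition, so the honest work is confirming that the inductive hypothesis kills precisely the cross terms that were absent in the $m=1$ case, leaving an identical formal expression; once that is in hand, the cohomological vanishing and the equivalence formula from Theorem \ref{thm 4} finish the argument.
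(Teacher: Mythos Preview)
Your approach is essentially identical to the paper's: both argue by successively killing the lowest-order nontrivial term using a formal isomorphism built from a $1$-cochain that bounds the $2$-cocycle $(\mu_m, N_m)$, then iterate. There is, however, one point you gloss over that the paper handles explicitly. A general $1$-cochain in $C^1_{NLA}(\mathfrak{g},\mathfrak{g}) = C^1_{LA}(\mathfrak{g},\mathfrak{g}) \oplus C^0_{NO}(\mathfrak{g},\mathfrak{g})$ has the form $(\psi_1', x)$ with a possibly nonzero second component $x$, so vanishing of $H^2_{NLA}$ only gives
\[
(\mu_1, N_1) = d^1(\psi_1', x) = \bigl(\delta^1(\psi_1'),\, -\partial^0(x) - \phi^1(\psi_1')\bigr),
\]
not $d^1(\psi_1, 0)$ as you assume. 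But the formal isomorphism $\psi_t = \mathrm{Id}_{\mathfrak{g}} - \psi_1 t$ shifts the infinitesimal only by $d^1(\psi_1, 0)$, as the computation in Theorem~\ref{thm 4} shows; it has no mechanism to absorb the $-\partial^0(x)$ piece directly.

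The paper resolves this by replacing $\psi_1'$ with $\psi_1 := \psi_1' + \delta^0(x)$. Then $\delta^1(\psi_1) = \delta^1(\psi_1') = \mu_1$ (since $\delta^1 \circ \delta^0 = 0$), and
\[
-\phi^1(\psi_1) = -\phi^1(\psi_1') - \phi^1(\delta^0(x)) = -\phi^1(\psi_1') - \partial^0(\phi^0(x)) = -\phi^1(\psi_1') - \partial^0(x) = N_1,
\]
using the chain-map identity $\phi^1 \circ \delta^0 = \partial^0 \circ \phi^0$ and $\phi^0 = \mathrm{Id}$. With this corrected $\psi_1$ your argument goes through exactly as written; the same absorption step is needed at each stage of the induction where you claim $(\mu_m, N_m) = d^1(\psi_m, 0)$.
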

\begin{proof}
Let $(\mu_t,N_t)$ be a deformation of $(\mathfrak{g}_N,\mu)$. Since, $(\mu_1,N_1)$ is a $2$-cocyle and $H^2_{NLA}(\mathfrak{g},\mathfrak{g})=0$, thus, there exists a map $\psi_1^{'}$ and $x\in \mathbb{K} $, where $\mathbb{K}$ is the ground field of  Nijenhuis Leibniz algebra $(\mathfrak{g}_T,\mu)$, such that
\[(\psi_1^{'},x) \in C^1_{NLA}(\mathfrak{g},\mathfrak{g})=C^1_{LA}(\mathfrak{g},\mathfrak{g})\oplus Hom(\mathbb{K},\mathfrak{g}),\]
and $(\mu_1,N_1)=d^1(\psi_1^{'},x)$. Hence, $\mu_1=\delta^1(\psi_1^{'})~ \mbox{and}~N_1=-\partial^0(x)-\phi^1(\psi_1)$. Let $\psi_1=\psi_1^{'}+\delta^0(x)$, then $\mu_1=\delta^1(\psi_1),~ N_1=-\phi^1(\psi_1)$. Let $\psi_t=Id_{\mathfrak{g}}-t\psi_t$. Then we have two equivalent deformation 
$(\mu_t,N_t)$ and $(\bar{\mu_t},\bar{N_t})$, where
\[\bar{\mu_t}=\psi_t^{-1} \circ \mu_t \circ (\psi_t \times \psi_t), ~~ \bar{N_t}=\psi_t^{-1} \circ N_t \circ \psi_t.
\]
By Theorem \ref{thm 4}, we have $\bar{\mu_1}=0,\bar{N_1}=0$. Hence,
\begin{align*}
&\bar{\mu_t}=\mu +\bar{\mu_2}t^2+\ldots ,\\
& \bar{N_t}=N+\bar{N_2}t^2+\ldots ~.
\end{align*}
Thus, the linear terms of  $(\bar{\mu_2},\bar{N_2})$ vanishes, hence, repeatedly applying the same argument we conclude that $(\mu_t,N_t)$ is equivalent to the trivial  deformation. Hence, $(\mathfrak{g}_N,\mu)$ is rigid. 

\end{proof}

\section{Abelian Extension of Nijenhuis Leibniz algebra}\label{chap6}
Let $V$ be a vector space and $\mathfrak{g}_N$ be a Nijenhuis Leibniz algebra. Then for any linear operator $N_V$ on $V$ with the trivial bracket $\mu$ defined by $\mu : V \times V \rightarrow V$ by $\mu(x,y)=0$ for all $x,y \in V$ makes $(V_{N_V},\mu)$ a Nijenhuis Leibniz algebra.

\begin{definition}
Let $(\mathfrak{g}_N, [~,~])$ be a Nijenhuis Leibniz algebra and $V$ be any vector space with a linear operator $N_V$ with trivial bracket $\mu$. Then the abelian extension of $(\mathfrak{g}_N, [~,~])$ is a short exact sequence of morphisms of Nijenhuis Leibniz algebra

 \[
\begin{tikzcd}
0 \arrow[r] & (V_{N_V},\mu) \arrow[r ,"i"] & (\hat{\mathfrak{g}}_{\hat{N}},[~,~]_{\wedge}) \arrow[r,"p"] & (\mathfrak{g}_{N},[~,~]) \arrow [r] & 0 .
\end{tikzcd} 
\]

 In this case we say that  $(\hat{\mathfrak{g}}_{\hat{N}},[~,~]_{\wedge})$ is an abelian extension of $(\mathfrak{g}_N, [~,~])$  by $(V_{N_V}, \mu )$.
\end{definition}
From the above definition it follows that for an abelian extension of $(\mathfrak{g}_N, [~,~])$  there exists a commutative diagram 

\[
\begin{tikzcd}
0 \arrow[r] & V \arrow[r ,"i"] \arrow[d,"N_V"]& \hat{\mathfrak{g}} \arrow[d,"\hat{N}"]\arrow[r,"p"] & \mathfrak{g} \arrow [r]\arrow[d,"N"]  & 0 \\
 0 \arrow[r] & V \arrow[r ,"i"] & \hat{\mathfrak{g}} \arrow [r,"p"]  &  \mathfrak{g} \arrow [r]  & 0
\end{tikzcd}
\]
where $\mu (a,b)=0$ for all $a,b \in V.$

\begin{definition}
Let $(\hat{\mathfrak{g}}_{\hat{N_1}},[~,~]_{\wedge_1})$ and $(\hat{\mathfrak{g}}_{\hat{N_2}},[~,~]_{\wedge_2})$ be two abelian extension of $(\mathfrak{g}_{N},[~,~])$ by $(V_{N_V},\mu)$. Then this two extension are said to be isomorphic if there exists an isomorphism of Nijenhuis Leibniz algebra $\xi : (\hat{\mathfrak{g}}_{\hat{N_1}},[~,~]_{\wedge_1}) \rightarrow  (\hat{\mathfrak{g}}_{\hat{N_2}},[~,~]_{\wedge_2}) $ so that the following diagram is commutative : 

\[
\begin{tikzcd}
0 \arrow[r] & (V_{N_V},\mu) \arrow[r ,"i"] \arrow[d,equal]& (\hat{\mathfrak{g}}_{\hat{N_1}},[~,~]_{\wedge_1})\arrow[d,"\xi"]\arrow[r,"p"] & (\mathfrak{g}_N,[~,~]) \arrow [r]\arrow[d,equal]  & 0 \\
 0 \arrow[r] & (V_{N_V},\mu) \arrow[r ,"i"] & (\hat{\mathfrak{g}}_{\hat{N_2}},[~,~]_{\wedge_2}) \arrow [r,"p"]  &  (\mathfrak{g}_N,[~,~]) \arrow [r]  & 0.
\end{tikzcd}
\]

\end{definition}
\begin{definition}
Let $(\hat{\mathfrak{g}}_{\hat{N}},[~,~]_{\wedge})$ be an abelian extension of a Nijenhuis Leibniz algebra $(\mathfrak{g}_N,[~,~])$ by $(V_{N_V}, \mu)$. A linear map $s: \mathfrak{g} \rightarrow \hat{\mathfrak{g}}$ is called a section of this abelian extension if $p \circ s =Id_{\mathfrak{g}}.$
\end{definition}
\begin{theorem}
Let $(\hat{\mathfrak{g}}_{\hat{N}},[~,~]_{\wedge})$ be an abelian extension of $(\mathfrak{g}_N,[~,~])$ by $(V_{N_V},\mu)$ with a section $s: \mathfrak{g} \rightarrow \hat{\mathfrak{g}}$. Now suppose $\bar{l}_V: \mathfrak{g} \otimes V \rightarrow V$ and $\bar{r}_V: V \otimes \mathfrak{g} \rightarrow V $ be two maps defined by   $\bar{l}_V(x,u)=[s(x),u]_{\wedge}$ and $\bar{r}_V(u,x)=[u,s(x)]_{\wedge}$ for all $x\in \mathfrak{g}, u \in V.$ Then,  $(V,\bar{l}_V,\bar{r}_V,N_V)$ is a representation of Nijenhuis Leibniz algebra  $(\mathfrak{g}_N,[~,~]).$
\end{theorem}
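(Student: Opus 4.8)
The plan is to transport the entire verification back to $\hat{\mathfrak{g}}$, where the Leibniz identity for $[~,~]_{\wedge}$ and the Nijenhuis identity for $\hat{N}$ are already available, and to exploit two structural facts coming from the commutative diagram. Identifying $V$ with $i(V)=\ker p$, the restriction of $[~,~]_{\wedge}$ to $V$ is trivial (so $V$ is an abelian ideal of $\hat{\mathfrak{g}}$) and $\hat{N}|_V=N_V$; moreover, since $p\circ\hat{N}=N\circ p$ and $p\circ s=Id_{\mathfrak{g}}$, the element $\tau(x):=\hat{N}(s(x))-s(N(x))$ lies in $\ker p=V$ for every $x\in\mathfrak{g}$. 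These two observations are what make the computation go through.

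First I would verify that $(V,\bar{l}_V,\bar{r}_V)$ is a representation of the Leibniz algebra $\mathfrak{g}$. For the first axiom I expand $\bar{l}_V(x,\bar{l}_V(y,u))=[s(x),[s(y),u]_{\wedge}]_{\wedge}$ via the Leibniz identity in $\hat{\mathfrak{g}}$ to obtain $[[s(x),s(y)]_{\wedge},u]_{\wedge}+[s(y),[s(x),u]_{\wedge}]_{\wedge}$. Because $p$ is a morphism with $p\circ s=Id_{\mathfrak{g}}$, one has $[s(x),s(y)]_{\wedge}=s([x,y])+w$ with $w\in V$; the term $[w,u]_{\wedge}$ vanishes since $w,u\in V$ and $V$ is abelian, so $[[s(x),s(y)]_{\wedge},u]_{\wedge}=\bar{l}_V([x,y],u)$ and the identity follows. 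The other two Leibniz-representation axioms are handled by the same mechanism: apply the Leibniz identity in $\hat{\mathfrak{g}}$, then discard every bracket of two elements of $V$.

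Next I would check the two Nijenhuis compatibility conditions. For the left one, I compute $\bar{l}_V(Nx,N_V(u))=[s(Nx),N_V(u)]_{\wedge}$ and use $N_V(u)=\hat{N}(u)$ together with $s(Nx)=\hat{N}(s(x))-\tau(x)$ to rewrite it as $[\hat{N}(s(x)),\hat{N}(u)]_{\wedge}$, since the correction $[\tau(x),\hat{N}(u)]_{\wedge}$ is a bracket of two $V$-elements and therefore vanishes. Applying the Nijenhuis identity for $\hat{N}$ with $a=s(x),b=u$ and simplifying the three resulting terms (using $\hat{N}|_V=N_V$, the relation $s(Nx)=\hat{N}(s(x))-\tau(x)$, and the vanishing of $V$–$V$ brackets) turns them precisely into $\bar{l}_V(Nx,u)$, $\bar{l}_V(x,N_V(u))$, and $(N_V\circ\bar{l}_V)(x,u)$; as the whole argument lies in $V$, the outer $\hat{N}$ acts as $N_V$, yielding $\bar{l}_V(Nx,N_V(u))=N_V(\bar{l}_V(Nx,u)+\bar{l}_V(x,N_V(u))-(N_V\circ\bar{l}_V)(x,u))$. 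The right-hand condition is proved symmetrically, starting from $\bar{r}_V(N_V(u),Nx)=[N_V(u),s(Nx)]_{\wedge}$.

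The calculations themselves are routine; the single conceptual point, and the step that demands the most care, is that the section $s$ need not commute with the Nijenhuis operators, so $\hat{N}(s(x))\neq s(N(x))$ in general. The argument succeeds precisely because every occurrence of the discrepancy $\tau(x)=\hat{N}(s(x))-s(N(x))$ appears inside a bracket with another element of $V$, and such brackets vanish by abelianness of $V$. Keeping careful track of which terms are $V$–$V$ brackets is thus the heart of the verification, and well-definedness (independence of the choices hidden in $s$) is ensured by the same vanishing.
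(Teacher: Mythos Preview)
Your proposal is correct and follows essentially the same approach as the paper: both arguments work inside $\hat{\mathfrak{g}}$, use the Leibniz and Nijenhuis identities there, and eliminate the discrepancies $[s(x),s(y)]_{\wedge}-s([x,y])$ and $\hat{N}(s(x))-s(N(x))$ by observing they lie in $V$ and hence bracket trivially with elements of $V$. Your framing in terms of the abelian ideal $V=\ker p$ and the correction $\tau(x)$ is slightly more explicit, but the content is the same.
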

\begin{proof}
As $s([x,y])-[s(x),s(y)]_{\wedge} \in V$, hence $[s([x,y]),u]_{\wedge}=[[s(x),s(y)]_{\wedge},u]_{\wedge}$ for all $x,y \in \mathfrak{g}, u \in V$. Therefore, we have
\begin{align*}
&\bar{l}_V(x,\bar{l}_V(y,u))-\bar{l}_V([x,y],u)-\bar{l}_V(y,\bar{l}_V(x,u)) \\
&= \bar{l}_V(x,[s(y),u]_{\wedge})-[s([x,y]),u]_{\wedge}-\bar{l}_V(y,[s(x),u]_{\wedge}).\\
&=[s(x),[s(y),u]_{\wedge}]_{\wedge}-[s([x,y]),u]_{\wedge}-[s(y),[s(x),u]_{\wedge}]_{\wedge}.\\
&=[s(x),[s(y),u]_{\wedge}]_{\wedge}-[[s(x),s(y)]_{\wedge},u]_{\wedge}-[s(y),[s(x),u]_{\wedge}]_{\wedge}.\\
&=0.
\end{align*}
Similarly, we can show that 
\[\bar{l}_V(x,\bar{r}_V(u,y))=\bar{r}_V(\bar{l}_V(x,u),y)+\bar{r}_V(u,[x,y])\]
\[ \bar{r}_V(u,[x,y]= \bar{r}_V(\bar{r}_V(u,x),y)+\bar{l}_V(x,\bar{r}_V(u,y))\]
for all $x,y \in \mathfrak{g}$ and $u\in V.$ Hence, $(V,\bar{l}_V, \bar{r}_{V})$ is a representation of a Leibniz algebra $(\mathfrak{g}, [~,~]).$
Now, $s(N(x))-\hat{N}(s(x)) \in V$, hence, $[s(N(x)),u]=[\hat{N}(s(x)),u]$ for all $x\in \mathfrak{g}, u \in V$. Therefore, we have \begin{align*}
&\bar{l}_V(N(x),N_V(u))=[s(N(x)), N_V(u)]_{\wedge}=[\hat{N}(s(x)),\hat{N}(u)]_{\wedge}\\
&=\hat{N}\bigg([\hat{N}(s(x)),u]_{\wedge}+[s(x),\hat{N}(u)]_{\wedge}-\hat{N}([s(x),u])\bigg)\\
&=N_V\bigg([s(N(x)),u]_{\wedge}+[s(x),N_V(u)]_{\wedge}-N_V([s(x),u]_{\wedge})\bigg)\\
&=N_V\bigg(\bar{l}_V(Nx,u)+\bar{l}_V(x,N_V(u))-(N_V \circ \bar{l}_V)(x,u)\bigg).
\end{align*}

Hence, \[\bar{l}_V(Nx,N_V(u))=N_V\bigg(\bar{l}_V(Nx,u)+\bar{l}_V(x,N_V(u))-(N_V \circ \bar{l}_V)(x,u)\bigg)\] for all $x,y \in \mathfrak{g}$ and $u\in V.$

Similarly, it can be shown that 
\[\bar{r}_V(N_V(u),Nx)=N_V\bigg(\bar{r}_V(N_V(u),x)+\bar{r}_V(u,Nx)-(N_V \circ \bar{r}_V)(u,x)\bigg)\]
for all $x \in \mathfrak{g}$ and $u \in V.$ Hence, $(V,\bar{l}_V,\bar{r}_V,N_V)$ is a representation of Nijenhuis Leibniz algebra  $(\mathfrak{g}_N,[~,~]).$

\end{proof}
\begin{proposition}
Let $(\hat{\mathfrak{g}}_{\hat{N}},[~,~]_{\wedge})$ be an abelian extension of $(\mathfrak{g}_N,[~,~])$ by $(V_{N_V},\mu)$. Then any two distinct sections $s_1,s_2: \mathfrak{g} \rightarrow \hat{\mathfrak{g}}$ give the same Nijenhuis Leibniz algebra representation $(V,\bar{l}_V,\bar{r}_V,N_V).$ 
\end{proposition}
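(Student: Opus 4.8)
The plan is to show that the representation structure $(V,\bar{l}_V,\bar{r}_V,N_V)$ built from a section does not depend on the choice of section. The key observation is that any two sections differ by a map landing in the kernel $V$ of $p$, and that $V$ carries the trivial bracket $\mu\equiv 0$, so the cross terms involving this difference vanish. I would set up the computation by letting $s_1,s_2\colon\mathfrak{g}\to\hat{\mathfrak{g}}$ be two sections and defining $\varphi:=s_1-s_2$. Since $p\circ s_1=p\circ s_2=\mathrm{Id}_{\mathfrak{g}}$, we have $p\circ\varphi=0$, hence $\varphi(x)\in\ker p=i(V)$ for all $x\in\mathfrak{g}$; identifying $V$ with its image, $\varphi(x)\in V$.

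Next I would compare the two left actions. Write $\bar{l}_V^{\,1}(x,u)=[s_1(x),u]_\wedge$ and $\bar{l}_V^{\,2}(x,u)=[s_2(x),u]_\wedge$. Then
\begin{align*}
\bar{l}_V^{\,1}(x,u)-\bar{l}_V^{\,2}(x,u)=[s_1(x)-s_2(x),u]_\wedge=[\varphi(x),u]_\wedge .
\end{align*}
Because both $\varphi(x)$ and $u$ lie in $V$ and the bracket $[~,~]_\wedge$ restricts to the trivial bracket $\mu$ on $V$ (this is exactly the content of the abelian extension, as recorded in the commutative diagram where $\mu(a,b)=0$ for all $a,b\in V$), we get $[\varphi(x),u]_\wedge=\mu(\varphi(x),u)=0$. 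Hence $\bar{l}_V^{\,1}=\bar{l}_V^{\,2}$. The identical argument applied to $\bar{r}_V(u,x)=[u,s(x)]_\wedge$ gives $[u,\varphi(x)]_\wedge=\mu(u,\varphi(x))=0$, so the two right actions coincide as well. Finally, the linear map $N_V$ is intrinsic to $(V_{N_V},\mu)$ and is not defined through any section, so it is automatically the same in both cases.

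The main point to be careful about, rather than a genuine obstacle, is the bookkeeping of the identification $i\colon V\hookrightarrow\hat{\mathfrak{g}}$: one must verify that the difference $\varphi(x)$ genuinely lands in the copy of $V$ on which the bracket is trivial, which follows from exactness of the sequence at $\hat{\mathfrak{g}}$. Once that identification is in place, the whole argument reduces to the single fact that the bracket of two elements of $V$ vanishes, so no Leibniz-identity or Nijenhuis-compatibility computation is needed. I therefore expect the proof to be short: establish $\varphi(x)\in V$ from $p\circ\varphi=0$, then collapse each of the three structure maps using triviality of $\mu$ on $V$, concluding that $(V,\bar{l}_V,\bar{r}_V,N_V)$ is independent of the chosen section.
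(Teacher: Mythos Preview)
Your proposal is correct and follows essentially the same route as the paper: define the difference $\varphi=s_1-s_2$, observe it lands in $V$ (the paper just writes $\gamma:\mathfrak{g}\to V$ without spelling out the exactness argument you include), and use the triviality of the bracket on $V$ to kill the cross terms for both the left and right actions. Your explicit remark that $N_V$ is section-independent is a minor addition the paper omits, but otherwise the arguments coincide.
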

\begin{proof}
Let $s_1$ and $s_2$ be any two distinct  sections. Now define $\gamma : \mathfrak{g} \rightarrow V$ by
\[\gamma(x)=s_1(x)-s_2(x),~\mbox{for all }~ x \in \mathfrak{g}.\]
Since, $\mu (u,v)=0$ for all $ u,v \in V.$
Thus, 
\begin{align*}
[s_1(x),u]_{\wedge}=[\gamma(x)+s_2(x),u]_{\wedge}=[\gamma(x),u]_{\wedge}+[s_2(x),u]_{\wedge}=[s_2(x),u]_{\wedge}.
\end{align*}
Similarly, $[u,s_1(x)]_{\wedge}=[u,s_2(x)]_{\wedge}$ for all $x,y \in \mathfrak{g}, u \in V.$
Thus, two distinct sections gives the same representation.
\end{proof}

\begin{proposition}
Let $(\hat{\mathfrak{g}}_{\hat{N}},[~,~]_{\wedge})$ be an abelian extension of $(\mathfrak{g}_N,[~,~])$ by $(V_{N_V},\mu)$. Let  $\psi : \mathfrak{g}\otimes \mathfrak{g} \rightarrow V$ and $\chi : \mathfrak{g} \rightarrow V$ be two maps defined by
 \begin{align*}
&\psi (x \otimes y)=[s(x),s(y)]_{\wedge}-s([x,y]),\\
&\chi (x)=\hat{T}(s(x))-s(T(x)), ~~ \mbox{for all}~ x,y \in \mathfrak{g}, \mbox{respcetively.}
\end{align*}
Then, the cohomological class of $(\psi,\chi)$ is independent of the choice of section.
\end{proposition}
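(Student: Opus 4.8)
The plan is to compare the cochains produced by two different sections and show that their difference is a coboundary in $\{C^\bullet_{NLA}(\mathfrak{g},V),d^\bullet\}$. Fix two sections $s_1,s_2:\mathfrak{g}\to\hat{\mathfrak{g}}$ and write $(\psi_i,\chi_i)$ for the pair built from $s_i$. Since $p\circ s_1=p\circ s_2=\mathrm{Id}_{\mathfrak{g}}$, the difference $\gamma:=s_1-s_2$ takes values in $\ker p=i(V)$, so it defines a linear map $\gamma\in C^1_{LA}(\mathfrak{g},V)=\mathrm{Hom}(\mathfrak{g},V)$. The whole computation rests on two structural facts already available: the bracket on $V$ is trivial ($\mu=0$), and on the abelian part $\hat{N}$ restricts to $N_V$ (read off from the commutative ladder $\hat{N}\circ i=i\circ N_V$). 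By the preceding proposition the induced representation $(\bar{l}_V,\bar{r}_V,N_V)$ does not depend on the chosen section, so all the action terms below may be written uniformly.

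First I would treat the Leibniz $2$-cochain component. Substituting $s_1=s_2+\gamma$ into $\psi_1(x,y)=[s_1(x),s_1(y)]_{\wedge}-s_1([x,y])$ and expanding bilinearly, the purely $\gamma$-$\gamma$ term $[\gamma(x),\gamma(y)]_{\wedge}=\mu(\gamma(x),\gamma(y))$ vanishes, the mixed terms $[s_2(x),\gamma(y)]_{\wedge}$ and $[\gamma(x),s_2(y)]_{\wedge}$ become $\bar{l}_V(x,\gamma(y))$ and $\bar{r}_V(\gamma(x),y)$ by the definition of the induced representation, and the term $[s_2(x),s_2(y)]_{\wedge}$ cancels against the corresponding piece of $\psi_2$. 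What survives is exactly
\[
\psi_1(x,y)-\psi_2(x,y)=\bar{l}_V(x,\gamma(y))+\bar{r}_V(\gamma(x),y)-\gamma([x,y]),
\]
which is precisely $(\delta^1\gamma)(x,y)$ for the Loday-Pirashvili differential in the bracket notation $\bar{l}_V(x,u)=[x,u]$, $\bar{r}_V(u,x)=[u,x]$.

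Next I would handle the operator component. From $\chi_i(x)=\hat{N}(s_i(x))-s_i(N(x))$ and linearity,
\[
\chi_1(x)-\chi_2(x)=\hat{N}(\gamma(x))-\gamma(N(x))=N_V(\gamma(x))-\gamma(N(x)),
\]
where the second equality uses $\hat{N}|_{V}=N_V$ together with $\gamma(x)\in V$. Comparing with the level-one comparison map $\phi^1(\gamma)(x)=\gamma(Nx)-N_V(\gamma(x))$ (the normalisation forced by the identity $d^1(\psi_1,0)=(\delta^1\psi_1,-\phi^1\psi_1)$ used in Theorem \ref{thm 4}), this reads $\chi_1-\chi_2=-\phi^1(\gamma)$. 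Assembling the two pieces in $C^2_{NLA}(\mathfrak{g},V)=C^2_{LA}(\mathfrak{g},V)\oplus C^1_{NO}(\mathfrak{g},V)$ gives
\[
(\psi_1,\chi_1)-(\psi_2,\chi_2)=(\delta^1\gamma,\,-\phi^1\gamma)=d^1(\gamma,0),
\]
since $d^1(\gamma,0)=(\delta^1\gamma,-\partial^0(0)-\phi^1\gamma)$ and $\partial^0(0)=0$. Hence the two cocycles differ by a coboundary and represent the same class in $H^2_{NLA}(\mathfrak{g},V)$.

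I expect the only real friction to be bookkeeping: making the sign and slot conventions of $\delta^1$ and $\phi^1$ line up so that the difference lands on the nose in the image of $d^1$, and invoking at the right moments the two facts that $\gamma$ lands in the abelian kernel (killing the $\gamma$-$\gamma$ bracket and converting cross-brackets into the module actions) and that $\hat{N}$ agrees with $N_V$ on $V$. I would also record at the outset that $(\psi,\chi)$ is genuinely a $2$-cocycle, so that the phrase \emph{cohomological class} is meaningful; this follows by expanding the Leibniz identity and the Nijenhuis relation for $[~,~]_{\wedge}$ and $\hat{N}$ and projecting the results onto $V$, which is the same style of computation and carries no extra difficulty.
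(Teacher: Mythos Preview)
Your proof is correct and follows essentially the same route as the paper: define $\gamma=s_1-s_2\in\mathrm{Hom}(\mathfrak{g},V)$, compute $\psi_1-\psi_2=\delta^1\gamma$ and $\chi_1-\chi_2=-\phi^1\gamma$, and conclude that the difference is $d^1(\gamma,0)$. Your write-up is in fact slightly more careful than the paper's, since you explicitly justify $\gamma(\mathfrak{g})\subset V$, invoke $\hat{N}|_V=N_V$, and note that $(\psi,\chi)$ is a genuine $2$-cocycle before speaking of its class.
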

\begin{proof}
Let $s_1$ and $s_2$ be two distinct  sections. Define $\gamma : \mathfrak{g} \rightarrow V$ by
\[\gamma(x)=s_1(x)-s_2(x),~\mbox{for all }~ x \in \mathfrak{g}.\]
Since, $\mu (u,v)=0$ for all $ u,v \in V.$
Therefore, for all $x,y \in \mathfrak{g}, u \in V,$
\begin{align*}
\psi_1(x,y)
&=[s_1(x),s_1(y)]_{\wedge}-s_1([x,y])\\
&=[s_2(x)+\gamma(x),s_2(y)+\gamma(y)]_{\wedge}-(s_2([x,y])+\gamma ([x,y]_{\mathfrak{g}}))\\
&=[s_2(x),s_2(y)]_{\wedge}-s_2([x,y])+[\gamma(x),s_2(y)]_{\wedge}+[s_2(x),\gamma (y)]_{\wedge}-\gamma([x,y])\\
&=\psi_2(x,y)+\delta^1 (\gamma)(x,y).
\end{align*}
Again,
\begin{align*}
\chi_1(x)=\hat{N}(s_1(x))-s_1(N(x))&=\hat{N}(s_2(x)+\gamma(x))-s_2(N(x))-\gamma(N(x))\\
&=\chi_2(x)+N_V(\gamma(x))-\gamma (N(x))=\chi_2(x)-\phi^1(\gamma)(x).
\end{align*}
Therefore, $(\psi_1,\chi_1)-(\psi_2,\chi_2)=(\delta^1 (\gamma),~-\phi^1(\gamma))=d^1(\gamma)$. Hence, $(\psi_1,\chi_1)$ and $(\psi_2,\chi_2)$ are in the same cohomology class $H^2_{NLA}(\mathfrak{g},V)$.
\end{proof}
\begin{proposition}
Two isomorphic abelian extension of a Nijenhuis Leibniz algebra $(\mathfrak{g}_N,[~,~])$ by $(V_{N_V},\mu)$ give rise to the same element in $H^2_{NLA}(\mathfrak{g},V)$.
\end{proposition}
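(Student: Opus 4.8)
The plan is to show that an isomorphism $\xi$ between two abelian extensions produces cohomologous $2$-cocycles, so that the element of $H^2_{NLA}(\mathfrak{g},V)$ associated to each extension (via the previous proposition) coincides. First I would fix sections $s_1$ and $s_2$ of the two extensions $(\hat{\mathfrak{g}}_{\hat{N_1}},[~,~]_{\wedge_1})$ and $(\hat{\mathfrak{g}}_{\hat{N_2}},[~,~]_{\wedge_2})$ respectively, and form the associated pairs $(\psi_1,\chi_1)$ and $(\psi_2,\chi_2)$ using the formulas $\psi_i(x,y)=[s_i(x),s_i(y)]_{\wedge_i}-s_i([x,y])$ and $\chi_i(x)=\hat{N_i}(s_i(x))-s_i(N(x))$. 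By the preceding proposition the cohomology class of each $(\psi_i,\chi_i)$ is independent of the chosen section, so I am free to pick whichever sections make the comparison cleanest.

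The key observation is that if $\xi:\hat{\mathfrak{g}} \rightarrow \hat{\mathfrak{g}}$ is the isomorphism of extensions, then $\xi \circ s_1$ is itself a section of the second extension: from the commutativity of the defining diagram, $p \circ \xi = p$, so $p \circ (\xi \circ s_1) = p \circ s_1 = \mathrm{Id}_{\mathfrak{g}}$. Since the class of $(\psi_2,\chi_2)$ does not depend on the section, I may compute it using the section $s_2 := \xi \circ s_1$. The next step is to substitute this choice into the formulas for $\psi_2$ and $\chi_2$ and exploit the two properties of $\xi$: that it is a morphism of Leibniz algebras intertwining the brackets, i.e.\ $\xi([a,b]_{\wedge_1})=[\xi(a),\xi(b)]_{\wedge_2}$, and that it commutes with the Nijenhuis operators, i.e.\ $\xi \circ \hat{N_1} = \hat{N_2} \circ \xi$. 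I would also use that $\xi$ restricts to the identity on $V$ (again from the commuting diagram), so that $\xi$ fixes elements of $V$ such as $\psi_1(x,y)$ and $\chi_1(x)$.

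Carrying out the substitution, for the bracket part I compute $\psi_2(x,y)=[\xi s_1(x),\xi s_1(y)]_{\wedge_2}-\xi s_1([x,y]) = \xi([s_1(x),s_1(y)]_{\wedge_1}) - \xi(s_1([x,y])) = \xi(\psi_1(x,y)) = \psi_1(x,y)$, where the last equality uses $\xi|_V=\mathrm{Id}_V$. Similarly, for the Nijenhuis part, $\chi_2(x)=\hat{N_2}(\xi s_1(x))-\xi s_1(N(x)) = \xi(\hat{N_1}(s_1(x))) - \xi(s_1(N(x))) = \xi(\chi_1(x)) = \chi_1(x)$. Thus with this compatible choice of section the two cocycles are literally equal, $(\psi_2,\chi_2)=(\psi_1,\chi_1)$, and in particular cohomologous; combined with the section-independence from the previous proposition, the classes in $H^2_{NLA}(\mathfrak{g},V)$ agree.

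The main obstacle, such as it is, is bookkeeping rather than conceptual: I must verify carefully that $\xi$ genuinely restricts to the identity on the copy of $V$ and satisfies $p\circ\xi=p$, both of which follow from the commutativity of the isomorphism diagram but should be stated explicitly, and I must make sure the bracket- and $\hat{N}$-compatibility of $\xi$ are applied to the correct brackets $[~,~]_{\wedge_1}$ and $[~,~]_{\wedge_2}$. Once those structural facts are pinned down, the computation collapses immediately, so the real content lies in organizing the diagram-chase and then invoking section-independence to upgrade the equality of these particular cocycles to equality of cohomology classes.
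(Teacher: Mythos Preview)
Your proposal is correct and follows essentially the same approach as the paper: choose the section $s_2:=\xi\circ s_1$ for the second extension, use that $\xi$ is a Nijenhuis Leibniz morphism with $\xi|_V=\mathrm{Id}_V$ to show $(\psi_2,\chi_2)=(\psi_1,\chi_1)$ on the nose, and invoke section-independence. The only cosmetic difference is that the paper goes straight to this choice of $s_2$ without first fixing an arbitrary section, but the content is identical.
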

\begin{proof}
Let $(\hat{\mathfrak{g}}_{\hat{N_1}},[~,~]_{\wedge_1})$ and $(\hat{\mathfrak{g}}_{\hat{N_2}},[~,~]_{\wedge_2})$ be two isomorphic abelian extension of $(\mathfrak{g}_N,[~,~])$ by $(V_{N_V},\mu)$. Let $s_1$ be a section of $(\hat{\mathfrak{g}}_{\hat{N_1}},[~,~]_{\wedge_1})$. Thus, we have 
$p_2 \circ (\xi \circ s_1)=p_1 \circ s_1 =Id_{\mathfrak{g}}$ as $p_2 \circ \xi =p_1$, where $\xi$ is the map between the two abelian extensions. Hence, $\xi \circ s_1$ is a section of $(\hat{\mathfrak{g}}_{\hat{N_2}},[~,~]_{\wedge_2})$.
Now define $s_2 :=\xi \circ s_1$. Since, $\xi$ is a homomorphism of Nijenhuis Leibniz algebras such that 
$\xi|_{V}=Id_V, ~\xi ([s_1(x),u]_{\wedge_1})=[s_2(x),u]_{\wedge_2}$ . Thus, $\xi|_{V}: V \rightarrow V$ is compatible with the induced representations.\\
Now, for all $x,y \in \mathfrak{g}$
\begin{align*}
&\psi_2(x \otimes y)=[s_2(x),s_2(y)]_{\wedge_2}-s_2([x,y])=[\xi(s_1(x)),\xi(s_1(y)])_{\wedge_2}-\xi(s_1([x,y]))\\
&=\xi ([s_1(x),s_1(y)]_{\wedge_1}-s_1([x,y]))=\xi (\psi_1 (x \otimes y))=\psi_1(x\otimes y)
\end{align*}
and
\begin{align*}
&\chi_2(x)= \hat{N_2}(s_2(x))-s_2(N(x))=\hat{N_2}(\xi(s_1(x)))-\xi(s_1(N(x)))\\
&=\xi (\hat{N_1 }(s_1(x))-s_1(N(x)))=\xi(\chi_1(x))=\chi_1(x).
\end{align*}
Therefore, two isomorphic abelian extension give rise to the same element in $H^2_{NLA}(\mathfrak{g},V).$
\end{proof}

{\bf Acknowledgements:} The second author is supported by the Core Research Grant (CRG) of Science and Engineering Research Board (SERB), Department of Science and Technology (DST), Govt. of India. (Grant Number- CRG/2022/005332)

\renewcommand{\refname}{REFERENCES}

\end{document}